\documentclass[a4paper,12pt]{article}
\usepackage{amsmath}
\usepackage{amsfonts}
\usepackage{amssymb}
\usepackage{amsthm}
\usepackage{bm}
\usepackage{bbm}
\usepackage{mathtools}
\mathtoolsset{showonlyrefs}
\usepackage{mleftright}
\mleftright
\usepackage{xparse}
\usepackage[hidelinks]{hyperref}
\theoremstyle{definition}

\theoremstyle{plain}
\newtheorem{thm}{Theorem}
\newtheorem{lem}{Lemma}

\newtheorem*{conj}{Conjecture}
\theoremstyle{remark}
\newtheorem*{rmk}{Remark}
\newtheorem*{note}{Note}

\newcommand{\ZZ}{\mathbb{Z}}

\newcommand{\RR}{\mathbb{R}}

\DeclarePairedDelimiterX{\preset}[2]{\lbrace}{\rbrace}{#1\,\delimsize\vert\,#2}

\newcommand{\abs}[1]{\left|#1\right|}
\newcommand{\norm}[1]{\left\|#1\right\|}
\newcommand{\floor}[1]{\left\lfloor#1\right\rfloor}

\let\O\relax
\NewDocumentCommand{\O}{E{^_}{{}{}} r()}{O^{#1}_{#2}\left(#3\right)}
\let\o\relax
\NewDocumentCommand{\o}{E{^_}{{}{}} r()}{o^{#1}_{#2}\left(#3\right)}
\NewDocumentCommand{\Omg}{E{^_}{{}{}} r()}{\operatorname{\Omega}^{#1}_{#2}\left(#3\right)}
\let\Re\relax
\NewDocumentCommand{\Re}{d()}{\IfValueTF{#1}{\operatorname{Re}\left(#1\right)}{\operatorname{Re}}}

\NewDocumentCommand{\mmu}{r()}{(\mu\ast\mu)\left(#1\right)}
\NewDocumentCommand{\rad}{r()}{\operatorname{rad}\left(#1\right)}
\let\exp\relax
\NewDocumentCommand{\exp}{d()}{\IfValueTF{#1}{\operatorname{exp}\left(#1\right)}{\operatorname{exp}}}
\let\log\relax
\NewDocumentCommand{\log}{d()}{\IfValueTF{#1}{\operatorname{log}\left(#1\right)}{\operatorname{log}}}
\NewDocumentCommand{\lr}{r()}{\left(#1\right)}
\newcommand{\dd}[1]{\mathop{{\operatorfont{d}}#1}}
\NewDocumentCommand{\indic}{E{^_}{{}{\mathbb{Z}}} r()}{\mathbbm{1}^{#1}_{#2}\left(#3\right)}
\NewDocumentCommand{\dlt}{E{^_}{{}{}} r()}{\delta^{#1}_{#2}\left(#3\right)}
\NewDocumentCommand{\omg}{E{^_}{{}{}} m o m}{\IfValueTF{#4}{\operatorname{\omega}^{#1}_{#2}\left(#3; #4, #5\right)}{\operatorname{\omega}^{#1}_{#2}\left(#3; #5\right)}}
\NewDocumentCommand{\J}{E{^_}{{}{}} m m}{J^{#1}_{#2}\left(#3, #4\right)}
\let\min\relax
\NewDocumentCommand{\min}{d\{\}}{\IfValueTF{#1}{\operatorname{min}\left\{#1\right\}}{\operatorname*{min}}}
\let\max\relax
\NewDocumentCommand{\max}{d\{\}}{\IfValueTF{#1}{\operatorname{max}\left\{#1\right\}}{\operatorname*{max}}}
\allowdisplaybreaks[4]
\title{\textbf{Sums of Apostol's M\"{o}bius functions of order $k$}}
\author{Reo Terada}
\begin{document}
\maketitle
\begin{abstract}
  In 1970, T. M. Apostol introduced the M\"obius function $\mu_{k}$ of order $k$ for all positive integer $k$, as a generalization of the M\"obius function $\mu = \mu_{1}$. For any integer $k \ge 2$, he proved $\sum_{n \le x} \mu_{k}(n) = A_{k} x + \O_{k}(x^{1/k} \log x)$ where $A_{k}$ is a positive constant. 
  In 2001, A. Bege conjectured both the conditional and unconditional estimates for the sum $\sum_{n \le x, (n, q) = 1}\mu_{k}(n)$ for any positive integer $q$. 
  In this paper, we give affirmative solutions to the conditional version of Bege's conjecture completely and the unconditional one partially. We also give a mean square estimate for the error term. 
\end{abstract}
\section{Introduction}
For any complex number $s = \sigma + it$ where $\sigma, t \in \RR$, we let $\zeta(s)$ denote the Riemann zeta-function. 
In 1970, T. M. Apostol~\cite{A} introduced the following generalization of the M\"{o}bius function $\mu$.
For each fixed positive integer $k$, the M\"{o}bius function $\mu_{k}$ of order $k$ is defined by
\begin{equation}
  \mu_{k}(n) \coloneq
  \begin{cases*}
    1        & if $n = 1$, \\
    (-1)^{r} & if $n = (p_{1} p_{2} \cdots p_{r})^{k} \prod_{i>r} p_{i}^{a_{i}}$ with $0\le a_{i}<k$, \\
    0        & if $p^{k + 1} \mid n$ for some prime $p$, \\
    1        & otherwise. 
  \end{cases*} \label{eq:muk}
\end{equation}
Here $p_{1}, p_{2}, p_{3}, \ldots$ are distinct primes. 
In other words, $\mu_{k}(n)$ vanishes if $n$ is divisible by $p^{k + 1}$ for some prime $p$;  
otherwise, $\mu_{k}(n)$ is $1$ unless the prime factorization of $n$ contains the $k$-th powers of exactly $r$ distinct primes, 
in which case $\mu_{k}(n) = (-1)^{r}$. 
If $k = 1$, $\mu_{k}(n)$ is the usual M\"{o}bius function, that is $\mu_{1}(n) = \mu(n)$. 
The asymptotic formula for the summatory function $\sum_{n\leq x}\mu_{k}(n)$ was first considered by Apostol~\cite{A}. 
For any integer $k \ge 2$ and any real number $x \ge 2$, he proved that
\begin{equation}
  \sum_{n \le x}\mu_{k}(n) = A_{k}x + E_{k}(x) \label{eq:A}
\end{equation}
holds, where the error term $E_{k}(x)$ is $\O_{k}(x^{1/k} \log x)$, and the constant $A_{k}$ is given by  
\begin{equation}
  A_{k} = \prod_{p} \lr(1 - 2 p^{-k} + p^{-(k + 1)}). \label{eq:Ak}
\end{equation}
In 1977, under the Riemann Hypothesis, D. Suryanarayana~\cite{S} showed that 
\begin{equation}
E_{k}(x) = \O(x^{4k/(4k^{2} + 1)}\exp(C_{0} \frac{\log x}{\log\log x})), \label{eq:S}
\end{equation}
where $C_{0}$ is an absolute positive constant.

We now consider a more general case by adding the condition that $n$ is coprime to $q$. 
Let $x \ge 1$, we write for integers $k \ge 2$ and $q \ge 1$, 
\[
\sum_{\substack{n \le x \\ (n, q) = 1}}\mu_{k}(n) = A_{k, q} x + E_{k, q}(x), 
\]
where $E_{k, q}(x)$ denotes the error term. 
In 2001, A. Bege~\cite{B} proposed the following conjecture on the size of $E_{k, q}(x)$. 
\begin{conj}\label{conj:B}
  For any real number $x \ge 3$ and any integers $k \ge 2$ and $q \ge 1$, we have
  \begin{equation}
    E_{k, q}(x) \ll \theta(q) x^{1/k} \exp(-D \frac{(\log x)^{3/5}}{(\log\log x)^{1/5}}) \label{eq:B-uncond} 
  \end{equation}
  where
  \begin{equation}
    A_{k, q} \coloneq \frac{\varphi(q)}{q}\prod_{p \nmid q}\lr(1 - 2 p^{-k} + p^{-(k + 1)}), \label{eq:Akq}
  \end{equation}
  $\varphi(q)$ is Euler's totient function, $\theta(q)$ is the number of positive squarefree divisors of $q$ and $D$ is an absolute positive constant. 
  In particular, when $q = 1$ the conjecture is
  \begin{equation}
    E_{k}(x) \ll x^{1/k} \exp(-D \frac{(\log x)^{3/5}}{(\log\log x)^{1/5}}). \label{eq:B-uncond-1}
  \end{equation}
  
  If the Riemann Hypothesis is true, then the estimate is improved to
  \begin{equation}
    E_{k, q}(x) \ll \theta(q) x^{2/(2 k + 1)} \exp(A \frac{\log x}{\log\log x}) \label{eq:B-RH}
  \end{equation}
  for any real number $x\ge 3$ and any integer $k\ge 2$ and $q \ge 1$. Here $A$ is an absolute positive constant. 
  In particular, when $q = 1$ the conjecture is
  \begin{equation}
    E_{k}(x) \ll x^{2/(2 k + 1)}\exp(A \frac{\log x}{\log\log x}). \label{eq:B-RH-1}
  \end{equation}
\end{conj}

In 2023, D. Banerjee, Y. Fujisawa, T. M. Minamide and Y. Tanigawa~\cite{BFMT} improved Apostol's result $E_{k}(x) \ll_{k} x^{1/k} \log x$ to
\begin{equation}
  E_{k}(x) \ll_{k} x^{1/k} \exp(-D_{0} k^{-13/5} \frac{(\log x)^{3/5}}{(\log\log x)^{1/5}}) \label{eq:BFMT}
\end{equation}
unconditionally, where $D_{0}$ is an absolute positive constant. 
This result solves the unconditional part of Bege's conjecture when $q = 1$ in a version which the $O$-constant and the constant $D$ in \eqref{eq:B-uncond-1} may depend on $k$. 

Recently, in 2025, under the Riemann Hypothesis, G. Martin and C. H. Yip~\cite{MY} show that
\begin{equation}
  E_{k}(x) \ll_{k, \varepsilon} x^{1/(k + 1) + \varepsilon} \label{eq:MY}
\end{equation}
for any real number $x \ge 1, \varepsilon > 0$ and any integer $k \ge 2$. 
This result gives an affirmative solution to the conditional part of Bege's conjecture with $q = 1$ in a version where the $O$-constant in \eqref{eq:B-RH-1} may depend on $k$. 
They also give the following oscillation result: 
\begin{equation}
  E_{k}(x) = B_{k} x^{1/(k+1)} + \Omg_{\pm}(x^{1/(2 k)} \log x), \quad x \to +\infty. \label{eq:MY-Omg}
\end{equation}
where
\begin{equation}
  B_{k} \coloneq \frac{\zeta(1/(k + 1))}{\zeta^{2}(k/(k + 1))}\prod_{p}\frac{\lr(1 - 2 p^{-k/(k + 1)} + p^{-1})\lr(1 - p^{-1})}{\lr(1 - p^{-k/(k + 1)})^{2}}. \label{eq:Bk}
\end{equation}

The purpose of this paper is to generalize Banerjee, Fujisawa, Minamide and Tanigawa's result~\eqref{eq:BFMT} and Martin and Yip's result~\eqref{eq:MY} to the case $q \ge 1$. we also estimate the mean square of the error term. 
\begin{thm}\label{thm:uncond}
  For any real number $x \ge 3$ and any integers $k \ge 2$ and $q \ge 1$, we have
  \begin{equation}
    E_{k, q}(x) \ll_{k} \theta(q) x^{1/k} \exp(-D_{0} k^{-8/5} \frac{(\log x)^{3/5}}{(\log\log x)^{1/5}}), \label{main}
  \end{equation}
  where $D_{0}$ is an absolute positive constant. 
\end{thm}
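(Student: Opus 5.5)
We work with the Dirichlet series of the restricted sum. A direct Euler product computation gives, for $\sigma>1$,
\[
\sum_{(n,q)=1}\frac{\mu_{k}(n)}{n^{s}} = \prod_{p\nmid q}\lr(1+p^{-s}+\dots+p^{-(k-1)s}-p^{-ks}) = \prod_{p\nmid q}\frac{1-2p^{-ks}+p^{-(k+1)s}}{1-p^{-s}},
\]
which equals $\zeta(s)\prod_{p\mid q}(1-p^{-s})\cdot\frac{1}{\zeta^{2}(ks)}\cdot G_{q}(s)$. Here $G_{q}(s)=\prod_{p\nmid q}(1-2p^{-ks}+p^{-(k+1)s})(1-p^{-ks})^{-2}\prod_{p\mid q}(1-p^{-ks})^{-2}$, and $G_q$ is an absolutely convergent Euler product in $\sigma>1/(k+1)$.

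Our plan is to realise $\mu_{k}\indic(\cdot)$ on $(n,q)=1$ as a convolution $\mu_{k}\mathbbm{1}_{(n,q)=1} = (f_{q}\ast \mathbbm{1}) \ast h$, where:
\begin{itemize}
\item $f_{q}(d)=\mu(d)$ for $d\mid q$ squarefree and $f_q(d)=0$ otherwise, so $\sum_d|f_q(d)|=\theta(q)$;
\item $h$ has generating function $G_{q}(s)/\zeta^{2}(ks)$.
\end{itemize}
We write $h=\mu^{(k)}\!\ast\mu^{(k)}\ast g_{q}$, where $\mu^{(k)}(m^{k})=\mu(m)$ and $\mu^{(k)}$ vanishes off $k$-th powers, and $g_{q}$ has series $G_{q}(s)$. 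Then
\[
\sum_{\substack{n\le x\\(n,q)=1}}\mu_{k}(n)=\sum_{d\mid q}\mu(d)\sum_{m\le x/d}\sum_{a b\le x/(dm)}\ldots .
\]
More efficiently, we first treat $q=1$ style sums: set $F(y)=\sum_{n\le y}(\mathbbm{1}\ast h)(n)$. The target is then $\sum_{d\mid q}\mu(d)F(x/d)$. If we show $F(y)=c_{q}y+\O_{k}(y^{1/k}\Delta(y))$ with $\Delta(y)=\exp(-D_0k^{-8/5}(\log y)^{3/5}(\log\log y)^{-1/5})$ and $O$-constant independent of $q$, then summing over the $\theta(q)$ squarefree divisors gives \eqref{main}. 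Here $(x/d)^{1/k}\Delta(x/d)\le x^{1/k}\Delta(x)$ up to constants, by monotonicity of $y^{1/k}\Delta(y)$. Small $x/d$ are handled trivially, since $|F(y)|\ll y$ and $y\le y^{1/k}\cdot$const whenever $y$ is bounded. The main term collapses to $A_{k,q}x$ by comparing Euler products.

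To estimate $F(y)$ we follow the method of \cite{BFMT}. Write $F(y)=\sum_{m}h(m)\floor{y/m}=y\sum_{m\le y}h(m)/m-\sum_{m\le y}h(m)\{y/m\}$. The second sum is handled by a Dirichlet hyperbola split at $m\le y^{1/k}$-type scales. The key input is the bound
\[
M_{q}(u):=\sum_{m\le u}\mu^{(k)}\!\ast\mu^{(k)}\ast g_{q}(m) \ll u^{1/k}\Delta_{k}(u),
\]
together with the corresponding bound for the tail $\sum_{m>u}h(m)/m\ll u^{1/k-1}\Delta$. These reduce, via $m=j^{k}\ell$, to sums of $(\mu\ast\mu)(j)$ over $j\le (u/\ell)^{1/k}$ weighted by $g_q(\ell)$. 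The prime number theorem with Vinogradov--Korobov error gives $\sum_{j\le v}\mmu(j)\ll v\exp(-c(\log v)^{3/5}(\log\log v)^{-1/5})$. Substituting $v=u^{1/k}$ costs a factor $k^{-3/5}$ in the exponent. Since $g_q$ is supported essentially on $(k+1)$-free parts and satisfies $\sum_\ell|g_q(\ell)|\ell^{-1/k}<\infty$, summing over $\ell$ is harmless.

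The main obstacle is uniformity in $q$ of these $g_q$ sums. Each local factor of $G_q$ at $\sigma=1/k$ is bounded by a constant independent of $p$ and $q$: the factors at $p\mid q$ are $(1-p^{-ks})^{-2}$, which are uniformly bounded. The sum $\sum_\ell|g_q(\ell)|\ell^{-\sigma}$ for $\sigma$ slightly less than $1/k$ is then dominated by $\prod_p(1+C p^{-(k+1)\sigma}+\ldots)$, uniformly in $q$. This requires $(k+1)\sigma>1$, i.e. $\sigma>1/(k+1)$, which yields a power saving in the $\ell$-tail. Balancing that saving against $\Delta$ is what produces the exponent $k^{-8/5}$ (instead of \cite{BFMT}'s $k^{-13/5}$). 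This gain comes from the cleaner factorization through $\zeta^{-2}(ks)$ and the elementary count $\sum_{n\le y}\mathbbm{1}=y+O(1)$.
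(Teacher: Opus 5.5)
Your decomposition agrees with the paper's for $q=1$, since $1\ast\mu^{(k)}\ast\mu^{(k)}\ast g_1=f_k\ast c_k$, and your main term is correct. The $q$-aspect, however, has a genuine gap: the key input you assert, $M_q(u)=\sum_{m\le u}h(m)\ll u^{1/k}\Delta_k(u)$ with a constant independent of $q$, is false.

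Since $h$ is supported on $(m,q)=1$, you can write $M_q(u)=\sum_{(\ell,q)=1}c_k(\ell)\sum_{j\le (u/\ell)^{1/k},\,(j,q)=1}(\mu\ast\mu)(j)$. Put $v=u^{1/k}$ and take $q$ divisible by every prime $p\le v^{1/2}$.
\begin{itemize}
\item Every $j\le v$ coprime to $q$ is then $1$ or a prime, so the $\ell=1$ term equals $1-2(\pi(v)-\pi(v^{1/2}))$, which has size $v/\log v$.
\item Any $\ell>1$ with $(\ell,q)=1$ and $c_k(\ell)\neq 0$ satisfies $\ell\ge p^{k+1}>v^{(k+1)/2}$. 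This leaves only $j=1$ in the inner sum, so these terms total $O(u^{1/(k+1)})$.
\end{itemize}
Hence $|M_q(u)|\gg u^{1/k}/\log u$ for such $q$, and no uniform $O(u^{1/k}\Delta_k(u))$ bound can hold.

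Your justification breaks at ``each local factor is bounded.'' At $p\mid q$ the factor of $G_q$ is $(1-p^{-ks})^{-2}$. Its absolute coefficient sum is $1+2p^{-k\sigma}+\cdots$ with $k\sigma\le 1$, so $\sum_\ell|g_q(\ell)|\ell^{-\sigma}\ge\prod_{p\mid q}(1-p^{-1})^{-2}$, which is unbounded in $q$. A bounded product would need factors of the form $1+O(p^{-1-\eta})$. In effect, your factorization cancels the $p\mid q$ factors of $\zeta^{-2}(ks)$ against those of $G_q$, and taking absolute values destroys that cancellation. You also discard $d^{-1/k}$ and pay a full $\theta(q)$ for the outer sum over $d\mid q$. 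As written, the argument therefore gives at best $\theta(q)\prod_{p\mid q}(1-p^{-1+k\epsilon})^{-2}x^{1/k}\Delta(x)$, which is not $\ll_k\theta(q)x^{1/k}\Delta(x)$.

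To repair this within your framework, fix $0<\epsilon<1/(k(k+1))$. Keep the weight $d^{-1/k+\epsilon}$ from the outer sum, using $\Delta(x/d)\le\Delta(x)/\Delta(d)$ and $\Delta(d)^{-1}\ll_{k,\epsilon}d^{\epsilon}$. Then merge it with the $p\mid q$ part of $g_q$. The total $q$-dependence becomes $\prod_{p\mid q}(1+p^{-1/k+\epsilon})(1-p^{-1+k\epsilon})^{-2}$. This is $\ll_k\theta(q)$, because each factor is below $2$ once $p$ exceeds a bound depending only on $k$.

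The paper avoids the issue differently. It keeps the condition $(\cdot,q)=1$ on every convolution variable, so the $c_k$-sum carries no $q$-dependence. It then pays $\theta(q)$ only through Lemma~\ref{lem:cohen} and through the restricted bound $\sum_{n\le x,(n,q)=1}(\mu\ast\mu)(n)\ll\theta(q)x\delta(x)$ of Lemma~\ref{lem:mmuq}. These enter different pieces $S_1,S_2,S_3$ of the hyperbola split, so the losses add rather than multiply.

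Finally, the exponent $k^{-8/5}$ does not come from a power saving in the $\ell$-tail. It comes from the hyperbola step you left unspecified:
\begin{itemize}
\item Splitting $\sum_{ml\le y}h(m)$ at $m\le y/L$, the error is roughly $y^{1/k}(L^{-1/k}\log y+L^{1-1/k}\Delta_k(y))$. This balances the trivial bound $\sum_{m\le U}|h(m)|\ll U^{1/k}\log U$ against the saving in $M(u)$.
\item The choice $L\approx\Delta_k(y)^{-1}$ leaves only a saving of $\Delta_k(y)^{1/k}$.
\item Since $\Delta_k(y)=\delta(y^{1/k})$ already carries $k^{-3/5}$, this produces $k^{-8/5}$.
\end{itemize}
This is exactly the paper's choice $\rho=\delta^{1/k}(z\delta^{1/k}(z))$ with $L=\rho^{-k}$, and you should carry it out explicitly.
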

\begin{rmk}
  Theorem~\ref{thm:uncond} answers a version of the unconditional part of Bege's conjecture in which the $O$-constant and the constant $D$ in \eqref{eq:B-uncond} may depend on $k$ in the affirmative. 
  If $q = 1$, it also slightly improves Banerjee, Fujisawa, Minamide and Tanigawa's result~\eqref{eq:BFMT}. 
\end{rmk}
\begin{thm}\label{thm:RH}
  Assume that the Riemann Hypothesis is true. 
  For any real number $x\ge 3$ and any integers $k\ge 2$ and $q \ge 1$, 
  \begin{equation}
    E_{k, q}(x) \ll \theta(q) x^{1/(k + 1)}\exp(C_{0} \frac{\log x}{\log\log x}) \label{mainRH}
  \end{equation}
  holds. Here $C_0$ is an absolute positive constant. 
\end{thm}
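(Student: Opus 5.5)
Under the Riemann Hypothesis we have to show that $E_{k,q}(x)\ll\theta(q)x^{1/(k+1)}\exp(C_0\log x/\log\log x)$. The plan is to factor the Dirichlet series of $\mu_k\mathbbm{1}_{(n,q)=1}$, write the summatory function as a convolution of a ``smooth'' part with a $\mu$-type part, and use RH only through the classical bound
\[
M(x)=\sum_{n\le x}\mu(n)\ll x^{1/2}\exp\Bigl(C\frac{\log x}{\log\log x}\Bigr),
\]
together with its coprime-to-$q$ analogues.

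\textbf{Euler product.} For $p\nmid q$ the local factor is
\[
1+p^{-s}+\dots+p^{-(k-1)s}-p^{-ks}=\frac{1-2p^{-ks}+p^{-(k+1)s}}{1-p^{-s}}.
\]
Since $1-2X^k+X^{k+1}=(1-X^k)^2-X^{2k}(1-X)^2\cdot(\text{bounded})$ after rearrangement, the product factorizes as
\[
F_q(s)=\prod_{p\nmid q}\frac{1-2p^{-ks}+p^{-(k+1)s}}{1-p^{-s}}
=\zeta(s)\,\frac{\zeta((k+1)s)}{\zeta(ks)^2}\,G(s)\,H_q(s).
\]
Here $G(s)$ is an Euler product absolutely convergent for $\sigma>1/(2k)$ (its local factors are $1+O(p^{-2k\sigma}+p^{-(2k+1)\sigma})$, up to terms compensating the $\zeta((k+1)s)$ factor). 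The factor $H_q(s)=\prod_{p\mid q}(\dots)$ is a finite product whose coefficients are supported on $q$-smooth numbers. Concretely, $H_q(s)=\prod_{p\mid q}(1-p^{-s})\cdot(\text{local correction})$, and its absolute coefficient sum up to $y$ will be bounded by $\ll\theta(q)$ times a factor $\exp(O(\log x/\log\log x))$. The $\theta(q)$ comes from the $\prod_{p\mid q}(1-p^{-s})$ part, whose coefficients are $\mu(d)$ with $d\mid q$ squarefree.

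\textbf{Convolution identity and hyperbola method.} Write $\mu_k\mathbbm{1}_{(\cdot,q)=1}=1*a*b$, where $b$ collects $\mu_{(k)}*\mu_{(k)}$ (the coefficients of $\zeta(ks)^{-2}$, supported on $k$-th powers) and $a$ collects the coefficients of $\zeta((k+1)s)GH_q$. Then
\[
\sum_{n\le x}=\sum_{m}a(m)\sum_{d^k\le x/m}(\mu*\mu)(d)\,\floor{\frac{x}{md^k}}.
\]
Replace $\floor{y}$ by $y-\{y\}$. The main term $x\sum a(m)(\mu*\mu)(d)/(md^k)$ gives $A_{k,q}x$ after completing the sums. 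The tails cost $x\cdot (x/m)^{-(k-1)/k}\cdot(\mu*\mu\text{ partial sums})$, and under RH
\[
\sum_{d\le y}(\mu*\mu)(d)\ll y^{1/2+o(1)},\qquad \sum_{d>y}\frac{(\mu*\mu)(d)}{d^k}\ll y^{1/2-k+\varepsilon'},
\]
with $\varepsilon'$ of size $\log\log$-type.

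\textbf{Balancing the fractional parts (main obstacle).} The hard term is $\sum\{x/(md^k)\}$. I would use a Dirichlet hyperbola split with a parameter $z$. For $d\le z$ the fractional-part sum is bounded trivially by $z$. For $d>z$ we swap the order: sum over $n'=x/(md^k)\le x/(mz^k)$, and use partial summation of $\sum_{d}(\mu*\mu)(d)$ over the range $d\le (x/(mn'))^{1/k}$. Under RH this contributes
\[
\sum_{n'\le x/z^k}\Bigl(\frac{x}{n'}\Bigr)^{1/(2k)+o(1)}\approx x^{1/(2k)}\Bigl(\frac{x}{z^k}\Bigr)^{1-1/(2k)}.
\]
Balancing this against $z$ gives $z=x^{1/(k+1)}$, which is exactly the claimed exponent. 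Note $1-2/(2k)\cdot$ arithmetic: setting $z=x^{1/(k+1)}$ yields $x^{1/(k+1)}$ in both terms.

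\textbf{Uniformity in $q$.} The sum over $m$ with weights $|a(m)|$ must not destroy this bound. The $\zeta((k+1)s)$ and $G$ parts converge absolutely at $\sigma=1/(k+1)$, since $2k>k+1$. The $H_q$ part contributes $\sum_{d\mid q}|\mu(d)|\cdot(\text{bounded})=\theta(q)\cdot O(1)$, uniformly in $d$, because each summand is evaluated at $x/d\le x$. All $o(1)$ losses are absorbed into $\exp(C_0\log x/\log\log x)$ via the RH bound on $M(x)$. The delicate points are keeping the implied constant independent of $k$ (by tracking that the $G$-product converges uniformly for $k\ge2$) and ensuring that the exponential factors do not pick up $k$-dependence.
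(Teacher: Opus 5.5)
There is a genuine gap, in the step you yourself flag as the main obstacle: the balancing is miscomputed, and the method as set up cannot reach the exponent $1/(k+1)$.

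Your bound for the range $d>z$ is $x^{1/(2k)}(x/z^{k})^{1-1/(2k)}=x\,z^{1/2-k}$. The completed tail $x\sum_{d>z}(\mu\ast\mu)(d)d^{-k}\ll x z^{1/2-k+o(1)}$ has the same size. Equating $x z^{1/2-k}$ with $z$ gives $z=x^{2/(2k+1)}$. So you only get $E_{k,q}(x)\ll \theta(q)x^{2/(2k+1)+o(1)}$, which is Bege's conjectured exponent. That is weaker than the theorem, since $2/(2k+1)>1/(k+1)$. With your choice $z=x^{1/(k+1)}$ the second term is $x^{3/(2(k+1))}$, not $x^{1/(k+1)}$.

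The loss is structural. After switching to the variable $n'=l$, you bound each inner sum $\sum_{z<d\le (x/(mn'))^{1/k}}(\mu\ast\mu)(d)$ in absolute value. That exploits square-root cancellation in $d$ but none in $l$. What is needed is square-root cancellation jointly over the pairs $(d,l)$ with $d^{k}l\le x$, $d>Y$. Concretely, you need an error $x^{1/2}Y^{(1-k)/2+o(1)}$, roughly the square root of the number $\asymp xY^{1-k}$ of such pairs, instead of your $xY^{1/2-k}$. This cannot come from the bound for $M(x)$ alone.

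The paper obtains it by applying Perron's formula to the range $d>Y$. The sum $\sum_{Y<d\le x^{1/k},\,(d,q)=1}(\mu\ast\mu)(d)\,\#\{l\le x/d^{k}:(l,q)=1\}$ has generating function $\zeta(s)\prod_{p\mid q}(1-p^{-s})\,g_{Y,q}(ks)$, where $g_{Y,q}(w)=\sum_{n>Y,\,(n,q)=1}(\mu\ast\mu)(n)n^{-w}$. The contour is then moved to $\Re(s)=1/2$, and the pole at $s=1$ reproduces the tail of the main term. On the new line RH enters twice:
first, through $\zeta(1/2+it)\ll \exp\bigl(C\log(|t|+3)/\log\log(|t|+3)\bigr)$, which supplies the cancellation in $l$;
second, through $g_{Y,q}(w)\ll\kappa(q)Y^{1/2-\Re(w)}x^{o(1)}$ for $\Re(w)>1/2$ (Lemma~\ref{lem:gy}), which is proved by a second contour shift using the RH bound for $1/\zeta$ to the right of the critical line.
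This gives $O(\theta(q)x^{1/2}Y^{(1-k)/2}x^{o(1)})$ against the trivial $O(\theta(q)Y\log Y)$ from $d\le Y$, and $Y\asymp x^{1/(k+1)}$ balances the two. The convolution with $c_k$ (your $a$) then goes through as you describe.

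A smaller point: do not bound $H_q$ on its own. If $G$ runs over all primes, the local factor of $H_q$ at $p\mid q$ is $(1+p^{-s}+\cdots+p^{-(k-1)s}-p^{-ks})^{-1}$. For $k=2$ its coefficients at $p^{j}$ are $\pm F_{j+1}$ (Fibonacci numbers), so $\sum_m|h_q(m)|m^{-1/3}$ diverges when $2\mid q$. It becomes harmless only after combining it with the $p$-factor of $\zeta((k+1)s)G(s)$, which yields $(1-p^{-s})(1-p^{-ks})^{-2}$. Alternatively, as in the paper, impose $(n,q)=1$ on each convolution factor.
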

\begin{rmk}
  Theorem~\ref{thm:RH} completely solves the conditional part of Bege's conjecture~\eqref{eq:B-RH}. 
  If $q = 1$, it also gives a little refinement to Martin and Yip's result~\eqref{eq:MY}. 
\end{rmk}

Next, we consider the mean square of the function 
\begin{equation}
  \Xi_{k, q}(x) \coloneq E_{k, q}(x) - B_{k, q} x^{1/(k + 1)}
\end{equation}
where
\begin{equation}
  \begin{aligned}
    B_{k, q} &\coloneq \frac{\varphi(q) \J{1/(k + 1)}{q} q^{(k - 2)/(k + 1)} \zeta(1/(k + 1))}{\J^{2}{k/(k + 1)}{q} \zeta^{2}(k/(k + 1))}\\
    &\phantom{{}\coloneq{}}\quad \times \prod_{p \nmid q} \frac{\lr(1 - 2 p^{-k/(k + 1)} + p^{-1})\lr(1 - p^{-1})}{\lr(1 - p^{-k/(k + 1)})^{2}}
  \end{aligned}\label{eq:Bkq}
\end{equation}
and
\begin{equation}
  \J{s}{q} \coloneq q^{s} \prod_{p \mid q} \lr(1 - p^{-s}). \label{eq:jordan}
\end{equation}
In other words, $\Xi_{k, q}(x)$ measures the discrepancy of the size of the error term $E_{k, q}(x)$ from its expected size. 
\begin{thm}\label{thm:sq}
  For any real number $T \ge 3$ and any integers $k \ge 2$ and $q \ge 1$, we have
  \begin{equation}
    \int^{T}_{1} \Xi_{k, q}^{2}(x) \dd{x} \ll C_{k, q}^{2} T^{1 + 1/k} \exp(G_{0}^{k} \lr(\log T)^{G_{1} k/(G_{1} k + 1)}) \label{eq:sq}
  \end{equation}
  where
  \begin{equation}
    C_{k, q} \coloneq \prod_{p \mid q} \frac{\lr(1 + p^{-1/(2 k)}) \lr(1 + p^{-(k + 1)/(2 k)})}{\displaystyle \lr(1 - p^{-1/2})^{2} \lr(1 - p^{-1}) \lr(1 - p^{-(k + 1)/k})} \label{eq:Ckn}
  \end{equation}
  and $G_{0}$ and $G_{1}$ are absolute positive constants. 
\end{thm}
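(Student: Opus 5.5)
Our approach starts from the Dirichlet series factorization. Since $\mu_k$ is multiplicative with local factor $1+p^{-s}+\dots+p^{-(k-1)s}-p^{-ks}$, we have
\[
F_q(s):=\sum_{(n,q)=1}\mu_k(n)n^{-s}=\frac{\zeta(s)}{\zeta^{2}(ks)}\,\frac{\prod_{p\mid q}(1-p^{-s})}{\prod_{p\mid q}(1-p^{-ks})^{2}}\,H_q(s),
\]
where $H_q(s)=\prod_{p\nmid q}(1-2p^{-ks}+p^{-(k+1)s})(1-p^{-ks})^{-2}$ converges absolutely for $\sigma>1/(k+1)$. At $s=1/(k+1)$, however, the factor $p^{-(k+1)s}=p^{-1}$ is borderline, so we isolate it: $H_q(s)=\zeta_q((k+1)s)\,\widetilde H_q(s)$, with $\widetilde H_q$ absolutely convergent for $\sigma>1/(k+2)$ (or at least $\sigma>1/(2k)$). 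This explains the pole at $1/(k+1)$, whose residue gives $B_{k,q}x^{1/(k+1)}$; the expression \eqref{eq:Bkq}, with its Jordan-type factors $J_s(q)$, should come from exactly this residue computation. Equivalently, $\mu_k\ast$(something) is a convolution $1\ast g\ast h$ with $\sum g(n)n^{-s}=\zeta_q((k+1)s)/\zeta_q^2(ks)$.

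The plan is then a hyperbola/convolution argument combined with a Parseval-type mean square estimate. We write
\[
\sum_{n\le x,(n,q)=1}\mu_k(n)=\sum_{m}a_q(m)\,\Phi_q(x/m),
\]
where $\Phi_q(y)=\#\{n\le y:(n,q)=1\}=\frac{\varphi(q)}{q}y+\Delta_q(y)$ and $a_q$ has generating series $\zeta((k+1)s)\widetilde H_q(s)/\zeta_q^{2}(ks)$. The residue at $s=1$ gives $A_{k,q}x$. The pole at $1/(k+1)$ arises from the $(k+1)$-th power part, which we extract by writing $a_q=\mathbb 1_{(k+1)\text{-powers}}\ast b_q$; the sum over $(k+1)$-th powers against $\Delta_q$ then yields the main term $B_{k,q}x^{1/(k+1)}$ via Euler–Maclaurin on $\sum_{d\le x^{1/(k+1)}}\ldots$. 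What remains is $\Xi_{k,q}(x)$, which is essentially $\sum_{m\le x} c_q(m)\,\psi(x/m)$-type sums, with $\psi$ the sawtooth, together with the coefficients of $1/\zeta^2(ks)$.

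For the mean square we use Perron/Plancherel. We move the line of integration to $\sigma=c$ slightly above $1/(2k)$ and write $\Xi_{k,q}(x)=\frac1{2\pi i}\int_{(c)}F_q(s)x^s\,ds/s$ plus a truncation error. By Plancherel (substituting $x=e^u$), $\int_T^{2T}|\Xi|^2dx\ll T^{1+2c}\int |F_q(c+it)|^2|c+it|^{-2}dt$. The growth of $\zeta(c+it)$ is $|t|^{1/2-c}$, so the integral converges for $c$ near $1/2$ only. To reach $\sigma$ near $1/(2k)$, we instead apply the functional equation, or more simply handle the $\zeta(s)$ factor elementarily through the convolution above: the $\psi(x/m)$-sums have mean square controlled by $\sum_{m,n}|c(m)c(n)|\min(m,n)/\max(m,n)\cdot\ldots$ (the standard Walfisz/Chowla large-values argument). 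With $c=1/(2k)+\epsilon(T)$, this gives $T^{1+1/k}$ times $|1/\zeta(k(c+it))|^{2}$ averaged. Here the key difficulty is bounding $\int_{-X}^{X}|\zeta(2kc'+ \ldots)|^{-2}$ near $\sigma=1/2$, i.e.\ $1/\zeta$ on $\Re(ks)=1/2+k\epsilon$, unconditionally. This is the \textbf{main obstacle}: $1/\zeta$ has no good unconditional bound left of $\sigma=1$. We would first try to avoid it by writing $1/\zeta^2(ks)$ as a Dirichlet series with coefficients $\mu\ast\mu$ supported on $k$-th powers and bounding the mean square via $\sum_{d}|\mmu(d)|\,d^{-1/2}\ldots$, truncated at $d\le y$. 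The tail $d>y$ is treated pointwise using the classical estimate $\sum_{d\le y}\mmu(d)\ll y\exp(-c(\log y)^{3/5-})$, while the head is treated in mean square. Balancing $y$ against $T$ and optimizing the parameter $\epsilon$ produces a loss of the shape $\exp(G_0^k(\log T)^{G_1k/(G_1k+1)})$; the factor $G_0^{k}$ comes from divisor-type bounds $\tau_{k}$-growth in the truncated sums.

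Finally, the $q$-dependence is tracked throughout by bounding each Euler factor at $p\mid q$ in absolute value on the lines $\sigma=1/(2k)$ and $\sigma=1/2$. This is what produces the factors $(1+p^{-1/(2k)})$, $(1+p^{-(k+1)/(2k)})$, $(1-p^{-1/2})^{-2}$, etc., making up $C_{k,q}$; squaring gives $C_{k,q}^2$.
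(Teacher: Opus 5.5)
There is a genuine gap, at exactly the step you call the main obstacle. Your unconditional treatment of $\zeta^{-2}(ks)$ cannot work, and with current knowledge no unconditional argument can, because the bound is of RH strength. Apply Cauchy--Schwarz on dyadic ranges: the bound in Theorem~\ref{thm:sq} makes $\int_1^\infty\Xi_{k,q}(x)x^{-s-1}\,dx$ holomorphic in $\sigma>1/(2k)$. Hence $F_q(s)/s-A_{k,q}/(s-1)-B_{k,q}/(s-1/(k+1))$ would continue holomorphically there. That forces every pole of $\zeta^{-2}(ks)$ at $s=\rho/k$ (with $\zeta(\rho)=0$, $\operatorname{Re}\rho>1/2$) to be cancelled by the other factors, which is far beyond what the zero-free region gives.

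The only input you propose is $\sum_{d\le y}(\mu\ast\mu)(d)\ll y\,\delta(y)$. It saves a sub-polynomial factor where a saving of $y^{1/2-o(1)}$ is needed. Concretely, take $q=1$, $z=x^{1/k}$, $M(u)=\sum_{d\le u}(\mu\ast\mu)(d)$ and $\psi(t)=\{t\}-\tfrac12$. Then exactly
\[
\sum_{n\le x}f_k(n)-\frac{x}{\zeta^{2}(k)}=-\sum_{d\le z}(\mu\ast\mu)(d)\,\psi\Bigl(\frac{x}{d^{k}}\Bigr)+\frac12M(z)-kx\int_z^\infty M(u)u^{-k-1}\,du .
\]
The $\psi$-sum is the harmless part: its diagonal has size $T^{1+1/k}(\log T)^{3}$. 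The $M$-terms, however, are unconditionally only $\ll z\,\delta(z)$, which gives a mean square of $T^{1+2/k}\delta^{2}(T^{1/k})$, not $T^{1+1/k+o(1)}$. Truncating at $y$ does not help. On the Dirichlet-series side, the tail $\sum_{d>y}(\mu\ast\mu)(d)d^{-w}$ is not known to converge for $\operatorname{Re} w<1$. On the physical side, it reappears as terms like $M(x^{1/k})-M(y)$. So no balancing of $y$ and $\epsilon$ produces a loss $\exp\bigl(G_0^{k}(\log T)^{G_1k/(G_1k+1)}\bigr)=T^{o(1)}$. Your attribution of $G_0^{k}$ to $\tau_k$-growth also has no basis.

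The paper's skeleton is the one you sketch:
\begin{itemize}
\item Perron's formula;
\item a shift to $\sigma=\beta$ with $1/(2k)<\beta\le 3/(5k)$, picking up $A_{k,q}x+B_{k,q}x^{1/(k+1)}$;
\item Plancherel via Ivi\'c (A.3), (A.5);
\item bounding the Euler factors at $p\mid q$ to produce $C_{k,q}$.
\end{itemize}
However, although the displayed statement omits it, the paper's proof assumes RH. It bounds $|\zeta(k(\beta+it))|^{-1}$ by Lemma~\ref{lem:mv-1323} with $\theta=k\beta-\tfrac12$, and bounds $|\zeta(\beta+it)|\ll(1+|t|)^{1/2-\beta+o(1)}$ by Lemma~\ref{lem:zeta-left}. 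This yields
\[
\int_0^\infty\Xi_{k,q}^{2}(x)x^{-1-2\beta}\,dx\ll k^{7}C_{k,q}^{2}\exp\bigl(\tfrac1k(e/(2k\beta-1))^{2Ck}\bigr),
\]
and the choice $2k\beta-1=\tfrac15(\log T)^{-1/(2Ck+1)}$ gives the theorem. Thus $G_0^{k}$ and the exponent $G_1k/(G_1k+1)$ come from the $\theta$-dependence $\log\bigl(e/(\theta\log\log|t|)\bigr)$ of the RH bound for $1/\zeta$ near the critical line.

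Separately, your claim that the Plancherel integral converges only for $c$ near $\tfrac12$ is wrong. Under RH, $|\zeta(c+it)|^{2}|c+it|^{-2}\ll|t|^{-1-2c+\varepsilon}$, and unconditionally convexity gives $\ll|t|^{-1-c+\varepsilon}$; either is integrable for every $c>0$. So the Walfisz/Chowla detour for the $\zeta(s)$ factor is unnecessary.
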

\begin{note}
  For any real number $\varepsilon > 0$ and any integer $k \ge 2$ and $q \ge 1$, we have
  \[
  C_{k, q} \ll_{\varepsilon} \theta^{1 + \varepsilon}(q)
  \]
  and
  \[
  C_{k, q} \ll_{k, \varepsilon} \theta^{\varepsilon}(q). 
  \]
\end{note}
\section{Lemmas}
\begin{lem}\label{lem:decomp1}
  For any integer $k \ge 2$, we decompose the function $\mu_{k}$ as follows. 
  \begin{equation}
    \mu_{k} = f_{k} \ast c_{k} \label{eq:decomp1}
  \end{equation}
  where
  \begin{equation}
    f_{k}(n) \coloneq \sum_{d^{k} \mid n}\mmu(d). \label{eq:fk}
  \end{equation}
  Then the function $c_{k}$ satisfies
  \begin{equation}
    \sum_{n = 1}^{\infty}\frac{c_{k}(n)}{n^{s}} = \prod_{p}\frac{1 - 2 p^{-k s} + p^{-(k + 1) s}}{\lr(1 - p^{-k s})^{2}} \label{eq:ck}
  \end{equation}
  for $\Re(s) > 1/(k + 1)$. The Dirichlet series and the Euler product in \eqref{eq:ck} converge absolutely for $\Re(s) > 1/(k + 1)$. 
\end{lem}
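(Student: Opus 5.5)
Since $\mu_k$ is multiplicative with $\mu_k(1)=1$, it has an invertible Dirichlet series, and so does $f_k$. We therefore \emph{define} $c_k \coloneq \mu_k \ast f_k^{-1}$, where $f_k^{-1}$ is the Dirichlet inverse of $f_k$. The decomposition \eqref{eq:decomp1} then holds by construction, and $c_k$ is multiplicative. What remains is to compute the Euler factors of both $\mu_k$ and $f_k$, divide them, and check the claimed convergence.

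First we compute the local factors of $\mu_k$. From \eqref{eq:muk}, $\mu_k(p^a)=1$ for $0\le a<k$, $\mu_k(p^k)=-1$, and $\mu_k(p^a)=0$ for $a>k$. Hence
\[
\sum_{a\ge0}\mu_k(p^a)p^{-as}=\frac{1-p^{-ks}}{1-p^{-s}}-p^{-ks}=\frac{1-2p^{-ks}+p^{-(k+1)s}}{1-p^{-s}}.
\]
Next, $f_k(n)=\sum_{d^k\mid n}\mmu(d)$ is the convolution of $\mathbbm{1}$ with the function supported on $k$-th powers $d^k$ with value $\mmu(d)$. Its Dirichlet series is therefore $\zeta(s)\zeta(ks)^{-2}$, whose local factor is $(1-p^{-ks})^2/(1-p^{-s})$. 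Dividing the two local factors gives exactly \eqref{eq:ck}, valid as an identity of formal Euler products and hence of Dirichlet series wherever both sides converge absolutely.

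For absolute convergence, expand the local factor of $c_k$ as
\[
(1-2p^{-ks}+p^{-(k+1)s})\sum_{j\ge0}(j+1)p^{-jks}
=1+\sum_{a\ge k+1}c_k(p^a)p^{-as}.
\]
The key point is that the coefficient of $p^{-ks}$ is $2-2=0$, and there is no $p^{-as}$ term with $1\le a<k$. The coefficients $c_k(p^a)$ are bounded by $O(a)$, since they are integer combinations of at most three terms of the form $(j+1)$. So for $\sigma=\Re(s)>1/(k+1)$ we get
\[
\sum_{p}\sum_{a\ge k+1}|c_k(p^a)|p^{-a\sigma}\ll\sum_p p^{-(k+1)\sigma}<\infty .
\]
Indeed, $(k+1)\sigma>1$, and the tail over $a$ is dominated by a geometric series once $p^{-\sigma}\le 2^{-\sigma}<1$. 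This shows that the Euler product $\prod_p\sum_a|c_k(p^a)|p^{-a\sigma}$ converges, which by multiplicativity is equivalent to absolute convergence of $\sum_n c_k(n)n^{-s}$. It also shows that the Euler product in \eqref{eq:ck} converges absolutely there.

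The only genuinely delicate step is the cancellation of the $p^{-ks}$ coefficient in the local factor of $c_k$. It is exactly this cancellation that pushes the abscissa of absolute convergence from $1/k$ down to $1/(k+1)$, so it should be verified explicitly from the series expansion above.
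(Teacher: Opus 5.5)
Your proposal is correct, and it is essentially the standard argument of Lemma~2.5 in \cite{BFMT}, to which the paper defers. You set $c_k = \mu_k \ast f_k^{-1}$, divide the local factors $\frac{1-2p^{-ks}+p^{-(k+1)s}}{1-p^{-s}}$ and $\frac{(1-p^{-ks})^2}{1-p^{-s}}$, and check that the $p^{-ks}$ coefficient cancels, so each Euler factor is $1+O(p^{-(k+1)\sigma})$; one small remark is that forming $f_k^{-1}$ needs only $f_k(1)=1$, and you never use invertibility of $\mu_k$.
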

\begin{proof}
  See the proof of Lemma~2.5 in \cite{BFMT}. 
\end{proof}
\begin{lem}\label{lem:decomp2}
  For any integers $k \ge 2$ and $n \ge 1$, we have
  \begin{equation}
    c_{k}(n) = \sum_{d^{k + 1} \mid n} b_{k}\lr(\frac{n}{d^{k + 1}}) \label{eq:ckbk}
  \end{equation}
  where
  \begin{equation}
    \sum_{n = 1}^{\infty} \frac{b_{k}(n)}{n^{s}} = \prod_{p} \frac{\lr(1 - 2 p^{-k s} + p^{-(k + 1) s})\lr(1 - p^{-(k + 1) s})}{\lr(1 - p^{-k s})^{2}} \label{eq:bk}
  \end{equation}
  for $\Re(s) > 1/(2 k)$. Both sides of Equation \eqref{eq:bk} converge absolutely for $\Re(s) > 1/(2k)$. In addition, 
  \begin{equation}
    \sum_{n = 1}^{\infty}\frac{\abs{b_{k}(n)}}{n^{1/(k + 1)}} \ll 1 \label{eq:absbk}
  \end{equation}
  holds. 
\end{lem}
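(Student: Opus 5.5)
We will \emph{define} $b_{k}$ as the multiplicative function whose Dirichlet series is the Euler product in \eqref{eq:bk}. In other words, $b_{k} = c_{k} \ast \nu$, where $\nu(n) = \mu(m)$ if $n = m^{k+1}$ and $\nu(n) = 0$ otherwise. Then \eqref{eq:ckbk} is the Möbius-type inversion $c_{k} = b_{k} \ast \mathbbm{1}_{(k+1)\text{-th powers}}$. Formally, by \eqref{eq:ck},
\[
\sum_{n} \frac{c_{k}(n)}{n^{s}} = \zeta((k+1)s) \prod_{p} \frac{\lr(1 - 2p^{-ks} + p^{-(k+1)s})\lr(1 - p^{-(k+1)s})}{\lr(1-p^{-ks})^{2}}.
\]
Once we know that both Dirichlet series converge absolutely for $\Re(s) > 1/(k+1)$, uniqueness of Dirichlet coefficients gives \eqref{eq:ckbk}. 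Equivalently, one can check \eqref{eq:ckbk} directly prime by prime, since everything is multiplicative.

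The main step is analysing the local factor. Put $X = p^{-s}$ and expand
\[
F(X) = \frac{(1 - 2X^{k} + X^{k+1})(1 - X^{k+1})}{(1 - X^{k})^{2}} .
\]
The numerator is $1 - 2X^{k} + 2X^{2k+1} - X^{2k+2}$, and $(1-X^{k})^{-2} = 1 + 2X^{k} + 3X^{2k} + \cdots$. The product then has coefficients of $X^{k}$ equal to $-2+2=0$, and the coefficients of $X^{j}$ with $1 \le j < 2k$, $j \ne k$, also vanish. Thus
\[
F(X) = 1 + \sum_{j \ge 2k} a_{j} X^{j},
\]
where the $a_{j}$ grow at most polynomially in $j$ (in fact linearly, from the expansion of $(1-X^{k})^{-2}$). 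Hence $b_{k}(p) = \cdots = b_{k}(p^{2k-1}) = 0$ and $\abs{b_{k}(p^{j})} \ll j$. It follows that
\[
\sum_{p} \sum_{j \ge 2k} \abs{b_{k}(p^{j})} p^{-j\sigma}
\]
converges for $\sigma > 1/(2k)$. This gives absolute convergence of both the Euler product and the Dirichlet series in \eqref{eq:bk} for $\Re(s) > 1/(2k)$, through the standard criterion that $\sum_{n} \abs{b_{k}(n)} n^{-\sigma} = \prod_{p}\lr(1 + \sum_{j} \abs{b_{k}(p^{j})} p^{-j\sigma})$.

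Since $1/(2k) < 1/(k+1)$ for $k \ge 2$, the Dirichlet series is absolutely convergent at $\sigma = 1/(k+1)$. Its value there is bounded by
\[
\prod_{p}\lr(1 + \sum_{j\ge 2k} C j\, p^{-j/(k+1)}),
\]
and the exponent satisfies $2k/(k+1) \ge 4/3 > 1$, so this product converges. To make the bound in \eqref{eq:absbk} independent of $k$, we use $p^{-j/(k+1)} \le p^{-2k/(k+1)} p^{-(j-2k)/(k+1)}$. This leaves a factor $\sum_{i \ge 0} C (i+2k)\, p^{-i/(k+1)}$, which is of size about $k^{2}$ when $p$ is small.

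I expect the uniformity in $k$ to be the main obstacle. The naive bound above loses a factor polynomial in $k$ for small primes, so a sharper count of the coefficients is needed. The first thing I would try is to write exactly
\[
F(X) = 1 + \frac{X^{2k}(\ldots)}{(1-X^{k})^{2}},
\]
so that the tail coefficients are controlled by $(1-\abs{X}^{k})^{-2}$ evaluated at $\abs{X} = p^{-1/(k+1)}$. For $p \ge 2$ one has $p^{-k/(k+1)} \le 2^{-2/3}$, so $(1 - p^{-k/(k+1)})^{-2}$ is bounded absolutely. Also $X^{2k}$ contributes $p^{-2k/(k+1)} \le p^{-4/3}$, which is summable over $p$. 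This should give the bound $\ll 1$ uniformly in $k$. Finally, \eqref{eq:ckbk} follows by comparing Dirichlet series in $\Re(s) > 1/(k+1)$ using Lemma~\ref{lem:decomp1}.
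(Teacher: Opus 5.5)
Your proposal is correct, and the rewriting you suggest at the end is the paper's argument: the local factor is exactly $1 - X^{2k}\bigl(\frac{1-X}{1-X^{k}}\bigr)^{2}$. Hence $\abs{b_{k}}$ is dominated coefficientwise by the multiplicative function with local factor $1 + X^{2k}\bigl(\frac{1+X}{1-X^{k}}\bigr)^{2}$, which equals $1 + O(p^{-2k\sigma})$ with an absolute constant for real $\sigma > 1/(2k)$; this gives both the absolute convergence and, via $p^{-2k/(k+1)} \le p^{-4/3}$, the bound uniform in $k$. Your preliminary estimate $\abs{a_{j}} \ll j$ is an unnecessary detour (it is what costs the factor $k^{2}$), and the identity for $c_{k}$ follows from $b_{k} = c_{k} \ast \nu$ as you say.
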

\begin{proof}
  Since
  \begin{align*}
    \frac{\lr(1 - 2 p^{-k s} + p^{-(k + 1) s})\lr(1 - p^{-(k + 1) s})}{\lr(1 - p^{-k s})^{2}} &= 1 - p^{-2 k s} \lr(\frac{1 - p^{-s}}{1 - p^{-k s}})^{2}, 
  \end{align*}
  we obtain
  \[
  \abs{b_{k}(n)} \le b^{\ast}_{k}(n)
  \]
  where the function $b^{\ast}_{k}$ is defined by the following equation: 
  \begin{equation}
    \sum_{n = 1}^{\infty} \frac{b^{\ast}_{k}(n)}{n^{s}} = \prod_{p} \lr(1 + p^{-2 k s} \lr(\frac{1 + p^{-s}}{1 - p^{-k s}})^{2}). 
  \end{equation}
  For $\sigma > 1/(2k)$, we see that
  \[
  \lr(\frac{1 + p^{-\sigma}}{1 - p^{-k \sigma}})^{2} \ll 1. 
  \]
  We then immediately obtain Lemma~\ref{lem:decomp2}. 
\end{proof}
\begin{lem}\label{lem:decomp3}
  For any integer $k \ge 2$, we have
  \[
  \sum_{n = 1}^{\infty} \frac{b_{k}(n)}{n^{s}} = \frac{\zeta^{2}((2 k + 1) s)}{\zeta(2 k s)\zeta((2 k + 2) s)} D_{k}(s)
  \]
  for $\Re(s) > 1/(2 k)$. Here
  \begin{equation}
    \begin{split}
      D_{k}(s) &= \sum_{n = 1}^{\infty} \frac{d_{k}(n)}{n^{s}}\\
      &= \prod_{p} \frac{\lr(1 - 2 p^{-k s} + p^{-(k + 1)s}) \lr(1 - p^{-(k + 1) s}) \lr(1 - p^{-(2 k + 1) s})^{2}}{\lr(1 - p^{-k s})^{2} \lr(1 - p^{-2 k s}) \lr(1 - p^{-(2 k + 2) s})}
    \end{split} \label{eq:dk}
  \end{equation}
  for $\Re(s) > 1/(3 k)$. The Dirichlet series and the Euler product in \eqref{eq:dk} converge absolutely for $\Re(s) > 1/(3 k)$. Furthermore, 
  \[
  \sum_{n = 1}^{\infty} \frac{\abs{d_{k}(n)}}{n^{1/(2 k)}} \ll 1
  \]
  holds. 
\end{lem}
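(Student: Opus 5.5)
The plan is to treat the identity as a factorization of Euler products and then bound the local factors of $D_{k}$. Set $X = p^{-s}$. By Lemma~\ref{lem:decomp2}, for $\Re(s) > 1/(2k)$ the local factor of $\sum b_{k}(n) n^{-s}$ is
\[
  1 - X^{2k}\lr(\frac{1 - X}{1 - X^{k}})^{2}.
\]
The local factor of $\zeta^{2}((2k+1)s)/(\zeta(2ks)\zeta((2k+2)s))$ is $(1-X^{2k})(1-X^{2k+2})/(1-X^{2k+1})^{2}$, and each zeta value converges absolutely for $\Re(s) > 1/(2k)$. Define $D_{k}(s)$ as the Euler product in \eqref{eq:dk}. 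Its local factor is exactly the quotient of the two factors above, so the identity $\sum b_{k}(n)n^{-s} = \zeta^{2}((2k+1)s)\zeta(2ks)^{-1}\zeta((2k+2)s)^{-1} D_{k}(s)$ holds termwise on each prime. Hence it holds for $\Re(s) > 1/(2k)$, once we know all three products converge absolutely there.

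The main step is to show that the local factor of $D_{k}$ is $1 + \O(\abs{X}^{3k})$ uniformly for $\abs{X} \le p^{-\sigma}$ with $\sigma > 1/(3k)$. Expand in powers of $X$:
\[
  1 - X^{2k}(1-X)^{2}(1-X^{k})^{-2} = 1 - X^{2k} + 2X^{2k+1} - X^{2k+2} + \O(\abs{X}^{3k}),
\]
where the error comes from the factor $(1-X^{k})^{-2} = 1 + 2X^{k} + \cdots$. Multiplying by $(1-X^{2k+1})^{2}/((1-X^{2k})(1-X^{2k+2}))$, which is $1 + X^{2k} - 2X^{2k+1} + X^{2k+2} + \O(\abs{X}^{4k})$, the terms of order $X^{2k}$ through $X^{2k+2}$ cancel. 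What remains is $1 + \O(\abs{X}^{3k})$, since every surviving cross term has degree at least $\min\{3k, 4k\}$. This degree bookkeeping is the delicate point. I expect it to need care only for small $k$, where $2k+2$ could approach $3k$; but $k \ge 2$ gives $2k+2 \le 3k$ with equality only at $k=2$, and those terms cancel exactly anyway.

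The same bound has to be uniform in $p$, including small primes. For that I would use the bound $\abs{X}^{k} \le 2^{-k\sigma}$, together with the fact that the denominators $1 - X^{k}$, $1 - X^{2k}$, $1 - X^{2k+2}$ stay bounded away from $0$ for $\sigma > 1/(3k)$. The analytic factor is then bounded, so the local factor is $1 + \O(p^{-3k\sigma})$. As a result, $\sum_{p} p^{-3k\sigma} < \infty$ for $\sigma > 1/(3k)$.

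For absolute convergence of the Dirichlet series I would mimic the proof of Lemma~\ref{lem:decomp2}. Each local factor is a power series in $X$ with no terms of degree $1$ through $3k-1$. Replacing its coefficients by their absolute values gives a majorant $\abs{d_{k}(n)} \le d^{\ast}_{k}(n)$. The series $\sum d^{\ast}_{k}(n)n^{-\sigma}$ has local factors $1 + \O(p^{-3k\sigma})$, because the majorant series converges for $\abs{X} < 1$ with uniformly bounded tails in the same range. So $\sum_{n}\abs{d_{k}(n)} n^{-\sigma}$ converges for $\sigma > 1/(3k)$. Taking $\sigma = 1/(2k) > 1/(3k)$ gives $\sum_{n}\abs{d_{k}(n)}n^{-1/(2k)} \ll 1$, with an absolute implied constant since $p^{-3k/(2k)} = p^{-3/2}$ does not depend on $k$.
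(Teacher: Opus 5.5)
Your route is the paper's: show that each local factor of $D_k$ is $1+O(p^{-3k\sigma})$, then dominate $d_k$ by a nonnegative multiplicative $d_k^{\ast}$. For each fixed $k$, the Euler-product identity and the absolute convergence for $\Re(s)>1/(3k)$ are fine as you argue them. The gap is your last sentence, where you claim the constant in $\sum_n|d_k(n)|n^{-1/(2k)}\ll 1$ is absolute ``since $p^{-3k/(2k)}=p^{-3/2}$''. This uniformity in $k$ is what the paper needs, since it tracks powers of $k$ in Theorem~\ref{thm:sq}. Write the local factor as $F_k(X)=1+\sum_{m\ge 3k}f_mX^m$. Your observation only handles the factor $|X|^{3k}$. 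You also need $\sum_{j\ge 0}|f_{3k+j}|\,2^{-j/(2k)}$ to be bounded independently of $k$, and your majorant step (radius of convergence $1$) gives only finiteness, with a $k$-dependent constant. The uniform sup bound $|F_k(X)-1|\le C|X|^{3k}$ from your expansion does not fix this. $F_k$ has poles on $|X|=1$ (at the nontrivial $k$-th roots of unity), so Cauchy estimates with a $k$-free sup bound must use a disc of radius about $c^{1/k}$. Comparing that with the evaluation point $2^{-1/(2k)}$ costs a factor of order $k$. You also cannot majorize your two factors separately, because that discards the cancellation in degrees $2k$, $2k+1$, $2k+2$.

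What is missing is an exact closed form, which the paper writes down (with $P$ expanded):
\[
F_k(X)=1-\frac{X^{3k}P(X)}{(1-X^k)^2(1-X^{2k})(1-X^{2k+2})},\qquad P(X)=(1-X)^2\bigl(2-X^k-2X^{k+1}+X^{3k+2}\bigr).
\]
You can get this from your own setup by keeping the remainders exact rather than as $O(\cdot)$ terms. Your multiplier is exactly $1+X^{2k}(1-X)^2/((1-X^{2k})(1-X^{2k+2}))$, and multiplying it by $1-X^{2k}(1-X)^2/(1-X^k)^2$ gives the display. The reciprocal of the denominator has nonnegative coefficients. Hence $|f_m|$ is at most the coefficient of $X^m$ in $X^{3k}P^{\ast}(X)/((1-X^k)^2(1-X^{2k})(1-X^{2k+2}))$, where $P^{\ast}$ has the absolute values of the coefficients of $P$, so $P^{\ast}(r)\le 18$ for $0\le r\le 1$. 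At $r=p^{-1/(2k)}$ one has $r^k=p^{-1/2}\le 2^{-1/2}$ and $r^{2k}, r^{2k+2}\le 1/2$. So the local factor of $\sum_n d_k^{\ast}(n)n^{-1/(2k)}$ is at most $1+18\,p^{-3/2}/((1-2^{-1/2})^2(1-2^{-1})^2)$, and the product over $p$ is an absolute constant. With this change your proof is complete.
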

\begin{proof}
  Since
  \begin{align*}
    &\frac{\lr(1 - 2 p^{-k s} + p^{-(k + 1)s}) \lr(1 - p^{-(k + 1) s}) \lr(1 - p^{-(2 k + 1) s})^{2}}{\lr(1 - p^{-k s})^{2} \lr(1 - p^{-2 k s}) \lr(1 - p^{-(2 k + 2) s})}\\
    &\quad = 1 - p^{-3 k s} \lr(1 - p^{-k s})^{-2} \lr(1 - p^{-2 k s})^{-1} \lr(1 - p^{-(2 k + 2) s})^{-1}\\
    &\quad \phantom{{} = {}} \quad \times \left(\vphantom{2 - 4 p^{-s} + 2 p^{-2 s} - p^{-k s} + 3 p^{-(k + 2) s} - 2 p^{-(k + 3) s} + p^{-(3 k + 2)} - 2 p^{-(3 k + 3) s} + p^{-(3 k + 4) s}} 2 - 4 p^{-s} + 2 p^{-2 s} - p^{-k s} + 3 p^{-(k + 2) s}\right.\\
    &\quad \phantom{{} = {}} \quad \phantom{{} \times \left(\vphantom{2 - 4 p^{-s} + 2 p^{-2 s} - p^{-k s} + 3 p^{-(k + 2) s} - 2 p^{-(k + 3) s} + p^{-(3 k + 2)} - 2 p^{-(3 k + 3) s} + p^{-(3 k + 4) s}}\right.}\quad \left.{} - 2 p^{-(k + 3) s} + p^{-(3 k + 2)} - 2 p^{-(3 k + 3) s} + p^{-(3 k + 4) s} \vphantom{2 - 4 p^{-s} + 2 p^{-2 s} - p^{-k s} + 3 p^{-(k + 2) s} - 2 p^{-(k + 3) s} + p^{-(3 k + 2)} - 2 p^{-(3 k + 3) s} + p^{-(3 k + 4) s}}\right), 
  \end{align*}
  we find that
  \[
  \abs{d_{k}(n)} \le d^{\ast}_{k}(n)
  \]
  where the function $d^{\ast}_{k}$ is defined by the following equation: 
  \begin{align*}
    &\sum_{n = 1}^{\infty} \frac{d^{\ast}_{k}(n)}{n^{s}}\\
    &\quad = \prod_{p} \left(\vphantom{1 + p^{-3 k s} \lr(1 - p^{-k s})^{-2} \lr(1 - p^{-2 k s})^{-1} \lr(1 - p^{-(2 k + 2) s})^{-1} \left(2 + 4 p^{-s} + 2 p^{-2 s} + p^{-k s} + 3 p^{-(k + 2) s} + 2 p^{-(k + 3) s} + p^{-(3 k + 2)} + 2 p^{-(3 k + 3) s} + p^{-(3 k + 4) s}\right)} 1 + p^{-3 k s} \lr(1 - p^{-k s})^{-2} \lr(1 - p^{-2 k s})^{-1} \lr(1 - p^{-(2 k + 2) s})^{-1}\right.\\
    &\quad \phantom{{} = \prod_{p} \left(\vphantom{1 + p^{-3 k s} \lr(1 - p^{-k s})^{-2} \lr(1 - p^{-2 k s})^{-1} \lr(1 - p^{-(2 k + 2) s})^{-1} \left(2 + 4 p^{-s} + 2 p^{-2 s} + p^{-k s} + 3 p^{-(k + 2) s} + 2 p^{-(k + 3) s} + p^{-(3 k + 2)} + 2 p^{-(3 k + 3) s} + p^{-(3 k + 4) s}\right)}\right.} \quad \times \left(\vphantom{2 + 4 p^{-s} + 2 p^{-2 s} + p^{-k s} + 3 p^{-(k + 2) s} + 2 p^{-(k + 3) s} + p^{-(3 k + 2)} + 2 p^{-(3 k + 3) s} + p^{-(3 k + 4) s}} 2 + 4 p^{-s} + 2 p^{-2 s} + p^{-k s} + 3 p^{-(k + 2) s} + 2 p^{-(k + 3) s}\right.\\
    &\quad \phantom{{} = \prod_{p} \left(\vphantom{1 + p^{-3 k s} \lr(1 - p^{-k s})^{-2} \lr(1 - p^{-2 k s})^{-1} \lr(1 - p^{-(2 k + 2) s})^{-1} \left(2 + 4 p^{-s} + 2 p^{-2 s} + p^{-k s} + 3 p^{-(k + 2) s} + 2 p^{-(k + 3) s} + p^{-(3 k + 2)} + 2 p^{-(3 k + 3) s} + p^{-(3 k + 4) s}\right)}\right.} \quad \phantom{{} \times \left(\vphantom{2 + 4 p^{-s} + 2 p^{-2 s} + p^{-k s} + 3 p^{-(k + 2) s} + 2 p^{-(k + 3) s} + p^{-(3 k + 2)} + 2 p^{-(3 k + 3) s} + p^{-(3 k + 4) s}}\right.} \quad \left.\left.{} + p^{-(3 k + 2)} + 2 p^{-(3 k + 3) s} + p^{-(3 k + 4) s} \vphantom{2 + 4 p^{-s} + 2 p^{-2 s} + p^{-k s} + 3 p^{-(k + 2) s} + 2 p^{-(k + 3) s} + p^{-(3 k + 2)} + 2 p^{-(3 k + 3) s} + p^{-(3 k + 4) s}}\right) \vphantom{1 + p^{-3 k s} \lr(1 - p^{-k s})^{-2} \lr(1 - p^{-2 k s})^{-1} \lr(1 - p^{-(2 k + 2) s})^{-1} \left(2 + 4 p^{-s} + 2 p^{-2 s} + p^{-k s} + 3 p^{-(k + 2) s} + 2 p^{-(k + 3) s} + p^{-(3 k + 2)} + 2 p^{-(3 k + 3) s} + p^{-(3 k + 4) s}\right)}\right). 
  \end{align*}
  For $\sigma > 1/(3 k)$, we see that
  \begin{align*}
    &\lr(1 - p^{-k \sigma})^{-2} \lr(1 - p^{-2 k \sigma})^{-1} \lr(1 - p^{-(2 k + 2) \sigma})^{-1}\\
    &\quad \times \left(\vphantom{2 + 4 p^{-\sigma} + 2 p^{-2 \sigma} + p^{-k \sigma} + 3 p^{-(k + 2) \sigma} + 2 p^{-(k + 3) \sigma} + p^{-(3 k + 2)} + 2 p^{-(3 k + 3) \sigma} + p^{-(3 k + 4) \sigma}} 2 + 4 p^{-\sigma} + 2 p^{-2 \sigma} + p^{-k \sigma} + 3 p^{-(k + 2) \sigma} \right.\\
    &\quad \phantom{{} \times \left(\vphantom{2 + 4 p^{-\sigma} + 2 p^{-2 \sigma} + p^{-k \sigma} + 3 p^{-(k + 2) \sigma} + 2 p^{-(k + 3) \sigma} + p^{-(3 k + 2)} + 2 p^{-(3 k + 3) \sigma} + p^{-(3 k + 4) \sigma}}\right.} \quad \left. {} + 2 p^{-(k + 3) \sigma} + p^{-(3 k + 2)} + 2 p^{-(3 k + 3) \sigma} + p^{-(3 k + 4) \sigma} \vphantom{2 + 4 p^{-\sigma} + 2 p^{-2 \sigma} + p^{-k \sigma} + 3 p^{-(k + 2) \sigma} + 2 p^{-(k + 3) \sigma} + p^{-(3 k + 2)} + 2 p^{-(3 k + 3) \sigma} + p^{-(3 k + 4) \sigma}}\right)\\
    &\quad \ll 1. 
  \end{align*}
  The above immediately gives Lemma~\ref{lem:decomp3}. 
\end{proof}
\begin{lem}[{\cite[Lemma~3.4]{C}}]\label{lem:cohen}
  For any real number $x \ge 0$ and any integer $q \ge 1$, we have
  \begin{equation}
    \sum_{\substack{n \le x \\ (n, q) = 1}}1 = \frac{\varphi(q)}{q} x + \O(\theta(q)). \label{eq:cohen}
  \end{equation}
\end{lem}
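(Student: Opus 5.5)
The plan is to count coprime integers with Möbius inversion over the divisors of $q$. Using $\sum_{d \mid (n, q)} \mu(d) = \indic(1)$ when $(n,q)=1$ (and $0$ otherwise), I would write
\[
\sum_{\substack{n \le x \\ (n, q) = 1}} 1 = \sum_{n \le x} \sum_{d \mid (n, q)} \mu(d) = \sum_{d \mid q} \mu(d) \floor{\frac{x}{d}},
\]
after interchanging the order of summation. Only squarefree divisors $d$ contribute, because $\mu(d) = 0$ otherwise.

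Next I would write $\floor{x/d} = x/d - \{x/d\}$ with $0 \le \{x/d\} < 1$. The main term is then
\[
x \sum_{d \mid q} \frac{\mu(d)}{d} = x \prod_{p \mid q} \lr(1 - p^{-1}) = \frac{\varphi(q)}{q} x,
\]
by multiplicativity of $d \mapsto \mu(d)/d$ over the divisors of $q$.

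The error term is $-\sum_{d \mid q} \mu(d) \{x/d\}$. Its absolute value is at most the number of squarefree divisors of $q$, which is $\theta(q)$. This gives \eqref{eq:cohen} with implied constant $1$, uniformly in $x \ge 0$; the case $x < 1$ is included because then both the sum and every floor vanish.

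There is no genuine obstacle. The only point needing care is to bound the error by $\theta(q)$ rather than by $\tau(q)$, and this is immediate because nonsquarefree $d$ carry $\mu(d) = 0$.
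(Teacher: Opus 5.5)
Your proof is correct and complete. The interchange gives $\sum_{d\mid q}\mu(d)\lfloor x/d\rfloor$, the main term is $x\varphi(q)/q$, and $\bigl|\sum_{d\mid q}\mu(d)\{x/d\}\bigr|\le\theta(q)$ because only squarefree $d$ contribute, so the lemma holds with implied constant $1$ for every $x\ge 0$. The paper gives no proof of its own and simply quotes Lemma~3.4 of Cohen; your Möbius-inversion argument is the standard proof of that result.
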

\begin{lem}\label{lem:div-avg}
  For any real number $x \ge 1$, we have
  \begin{equation}
    \sum_{n \le x} \tau(n) \ll x \log(1 + x). 
  \end{equation}
\end{lem}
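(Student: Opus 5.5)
We prove Lemma~\ref{lem:div-avg} by the Dirichlet hyperbola idea in its simplest form: write $\tau(n)=\sum_{d\mid n}1$ and swap the order of summation. Here $\tau(n)$ is the number of positive divisors of $n$.

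Swapping the order of summation gives
\[
\sum_{n \le x} \tau(n) = \sum_{d \le x} \sum_{\substack{n \le x \\ d \mid n}} 1 = \sum_{d \le x} \floor{\frac{x}{d}} \le x \sum_{d \le x} \frac{1}{d}.
\]
The harmonic sum is handled by comparison with the integral $\int 1/t$, which yields
\[
\sum_{d \le x} \frac{1}{d} \le 1 + \int_{1}^{x} \frac{\dd{t}}{t} = 1 + \log x .
\]
Combining the two displays, the left-hand side of the lemma is at most $x(1+\log x)$.

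It remains to replace $1+\log x$ by $\log(1+x)$. For $x \ge 1$ we have $\log(1+x) \ge \log 2 > 0$ and $\log x \le \log(1+x)$. Therefore
\[
1 + \log x \le \Bigl(\frac{1}{\log 2} + 1\Bigr)\log(1+x),
\]
and the claimed bound $\sum_{n \le x}\tau(n) \ll x\log(1+x)$ follows with an absolute implied constant.

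There is no real obstacle. The only point needing care is the range near $x=1$, where $\log x$ vanishes; this is exactly why the statement uses $\log(1+x)$, which stays bounded below by $\log 2$ there.
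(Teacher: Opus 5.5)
Your proof is correct, but it takes a different and more elementary route than the paper. The paper's proof is just a citation of Dirichlet's divisor asymptotic (Theorem~3.3 in Apostol's textbook),
\[
\sum_{n \le x}\tau(n) = x\log x + (2\gamma - 1)x + O(\sqrt{x}) \qquad (x \ge 1),
\]
whose proof genuinely uses the hyperbola method: the lattice points under $dm \le x$ are split at $\sqrt{x}$ and the symmetry is exploited to obtain the $O(\sqrt{x})$ error.

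Despite your label, your argument does not use the hyperbola split. You only interchange the summations and drop fractional parts via $\floor{x/d} \le x/d$. That costs an error of size $O(x)$, which does not matter here because the lemma asks only for an upper bound of order $x\log(1+x)$.

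What your version buys is self-containment and an explicit constant $1 + 1/\log 2$. It also spells out the conversion near $x=1$, namely $\log(1+x) \ge \log 2$. The paper's one-line citation leaves that conversion implicit, even though the cited formula is stated in terms of $x\log x$ and must still be turned into a bound by $x\log(1+x)$.

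What the citation buys is much more precision: a secondary main term and a power-saving error. The paper never needs this. Every use of the lemma (bounding $\sum \abs{\mmu(d)}$ in the $S_1$ estimates for both theorems) needs only the crude upper bound you prove.
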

\begin{proof}
  This lemma follows from \cite[Theorem~3.3]{A1}. 
\end{proof}
\begin{lem}\label{lem:coprime}
  For any real number $x \ge 1$, any integer $q \ge 1$ and any multiplicative function $f$, we have
  \begin{equation}
    \sum_{\substack{n \le x \\ (n, q) = 1}} f(n) = \sum_{\substack{d \le x \\ \rad(d) \mid n}} f^{\ast(-1)}(d) \sum_{l \le x/d} f(l) \label{eq:coprime}
  \end{equation}
  where $f^{\ast(-1)}$ is the Dirichlet inverse function of $f$ and
  \begin{equation}
    \rad(d) \coloneq \prod_{p \mid d} p. \label{eq:rad}
  \end{equation}
\end{lem}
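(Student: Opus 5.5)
The condition under the sum in \eqref{eq:coprime} should read $\rad(d) \mid q$ rather than $\rad(d) \mid n$; the plan below proves the identity in that form. The idea is to write the coprimality-restricted function as a Dirichlet convolution of $f$ with a multiplicative function supported on integers composed of primes dividing $q$, and then sum over the hyperbola $dl \le x$.

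Define $g(n) \coloneq f(n)$ if $(n, q) = 1$ and $g(n) \coloneq 0$ otherwise. Define $h(d) \coloneq f^{\ast(-1)}(d)$ if $\rad(d) \mid q$ and $h(d) \coloneq 0$ otherwise. Since $f$ is multiplicative, $f(1) = 1$, so $f^{\ast(-1)}$ exists and is multiplicative. Both conditions "$(n, q) = 1$" and "$\rad(d) \mid q$" factor over primes, so $g$ and $h$ are multiplicative too. The key claim is $g = h \ast f$. Because both sides are multiplicative, I will check the claim only on prime powers $p^{a}$ with $a \ge 1$; the value at $1$ is trivially $1 = 1$.

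\emph{Case $p \nmid q$.} Here $h(p^{j}) = 0$ for $j \ge 1$, so $(h \ast f)(p^{a}) = h(1) f(p^{a}) = f(p^{a}) = g(p^{a})$.

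\emph{Case $p \mid q$.} Here $h(p^{j}) = f^{\ast(-1)}(p^{j})$ for every $j \ge 0$. Hence
\[
(h \ast f)(p^{a}) = \sum_{j = 0}^{a} f^{\ast(-1)}(p^{j}) f(p^{a - j}) = (f^{\ast(-1)} \ast f)(p^{a}) = 0 = g(p^{a}).
\]
The middle equality holds because the divisors of $p^{a}$ are exactly the $p^{j}$, so the convolution at $p^{a}$ only involves values on powers of $p$.

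With $g = h \ast f$ in hand, I sum over $n \le x$ and regroup $n = dl$:
\[
\sum_{\substack{n \le x \\ (n, q) = 1}} f(n) = \sum_{n \le x} g(n) = \sum_{dl \le x} h(d) f(l) = \sum_{\substack{d \le x \\ \rad(d) \mid q}} f^{\ast(-1)}(d) \sum_{l \le x/d} f(l).
\]
All sums are finite, so no convergence issues arise. The only point needing care is that $h$ is a genuine Dirichlet convolution factor, namely the multiplicativity check above; I do not expect any real obstacle.
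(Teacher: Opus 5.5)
Your proof is correct and is essentially the paper's argument: both establish the pointwise identity $\sum_{dl=n,\ \operatorname{rad}(d)\mid q} f^{\ast(-1)}(d)f(l) = f(n)$ if $(n,q)=1$ and $0$ otherwise, and then sum over $n\le x$. The only difference is that you verify this identity on prime powers using multiplicativity, while the paper writes $f(n)=f(a)f(b)$ with $a$ the $q$-part of $n$ and uses $\sum_{da=m,\ \operatorname{rad}(m)\mid q} f^{\ast(-1)}(d)f(a)=\lfloor 1/m\rfloor$; you are also right that $\operatorname{rad}(d)\mid n$ in the statement is a typo for $\operatorname{rad}(d)\mid q$, which is what the paper's proof and its later use of the lemma actually employ.
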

\begin{proof}
  For any integer $n \ge 1$, we see that
  \begin{equation}
    f(n) = f\lr(\lr(n, q^{n}) \frac{n}{\lr(n, q^{n})}) = f\lr(\lr(n, q^{n})) f\lr(\frac{n}{\lr(n, q^{n})}) = \sum_{\substack{a b = n \\ \rad(a) \mid q \\ (b, q) = 1}} f(a) f(b) \label{eq:coprime-1}
  \end{equation}
  and
  \begin{equation}
    \sum_{\substack{d l = n \\ \rad(d) \mid q \\ \rad(l) \mid q}} f^{\ast(-1)}(d) f(l) = \sum_{\substack{d l = n \\ \rad(n) \mid q}} f^{\ast(-1)}(d) f(l) = \floor{\frac{1}{n}}. \label{eq:coprime-2}
  \end{equation}
  Combinig \eqref{eq:coprime-1} and \eqref{eq:coprime-2}, we obtain
  \begin{equation}
    \begin{aligned}
      \sum_{\substack{d l = n \\ \rad(d) \mid q}} f^{\ast(-1)}(d) f(l) &= \sum_{\substack{d a b = n \\ \rad(d) \mid q \\ \rad(a) \mid q \\ (b, q) = 1}} f^{\ast(-1)}(d) f(a) f(b)\\
      &= \sum_{\substack{m b = n \\ (b, q) = 1}} \floor{\frac{1}{m}} f(b)\\
      &= f(n) \floor{\frac{1}{(n, q)}}. 
    \end{aligned}\label{eq:coprime-3}
  \end{equation}
  Summing up \eqref{eq:coprime-3} with respect to all positive integer $n \le x$ concludes the proof. 
\end{proof}
\begin{lem}\label{lem:mmu}
  For any real number $x \ge 1$, we have
  \begin{equation}
    \sum_{n \le x}\mmu(n) \ll x \dlt(x) \label{eq:mmu}
  \end{equation}
  where
  \[
  \dlt(x) \coloneq \exp(-\frac{D_{1} \lr(\log(x + e^{e^{1/3}}))^{3/5}}{\displaystyle \lr(\log\log(x + e^{e^{1/3}}))^{1/5}})
  \]
  and $D_{1}$ is an absolute constant. 
\end{lem}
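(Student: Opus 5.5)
We prove the bound directly from the generating function
\[
\sum_{n=1}^{\infty}\frac{\mmu(n)}{n^{s}}=\frac{1}{\zeta^{2}(s)},\qquad \Re(s)>1,
\]
combining a truncated Perron formula with the Korobov--Vinogradov zero-free region, exactly as in the classical proof of the prime number theorem with this error term. For large $x$, put $\sigma_{0}=1+1/\log x$ and let $T$ be a parameter with $2\le T\le x$, to be chosen at the end. Since $\abs{\mmu(n)}\le\tau(n)$, Lemma~\ref{lem:div-avg} bounds the tails, and the truncated Perron formula gives
\[
\sum_{n\le x}\mmu(n)=\frac{1}{2\pi i}\int_{\sigma_{0}-iT}^{\sigma_{0}+iT}\frac{x^{s}}{s\,\zeta^{2}(s)}\dd{s}+\O(\frac{x(\log x)^{2}}{T}).
\]
Small $x$ are covered by the trivial bound $\sum_{n\le x}\tau(n)\ll x\log(1+x)$, after adjusting the constant (this is also why the shift $e^{e^{1/3}}$ appears inside $\dlt(x)$).

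Next we move the line of integration to the left, to $\Re(s)=\sigma_{1}(T):=1-c\,(\log T)^{-2/3}(\log\log T)^{-1/3}$. By the Korobov--Vinogradov zero-free region, $\zeta(s)\neq0$ on the closed region $\sigma\ge1-c(\log(\abs{t}+3))^{-2/3}(\log\log(\abs{t}+3))^{-1/3}$. In the same region we have the standard bound
\[
\frac{1}{\zeta(s)}\ll(\log(\abs{t}+3))^{2/3}(\log\log(\abs{t}+3))^{1/3}.
\]
Hence $1/(s\zeta^{2}(s))$ is holomorphic in the rectangle with vertices $\sigma_{1}\pm iT$ and $\sigma_{0}\pm iT$. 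The only candidate singularity is $s=1$, but $1/\zeta^{2}$ has a double zero there, so there is no residue and no main term. (This is consistent with the claimed bound having no main term.)

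We then estimate the three remaining sides. On the vertical segment $\Re(s)=\sigma_{1}$, the squared bound for $1/\zeta$ gives
\[
\int_{-T}^{T}\frac{x^{\sigma_{1}}(\log T)^{4/3}(\log\log T)^{2/3}}{\abs{\sigma_{1}+it}}\dd{t}\ll x\exp(-c\frac{\log x}{(\log T)^{2/3}(\log\log T)^{1/3}})(\log T)^{2}.
\]
Each horizontal segment contributes $\ll x(\log T)^{2}/T$. We choose $T=\exp((\log x)^{3/5}(\log\log x)^{-1/5})$. With this choice both $\log T$ and $(\log x)/((\log T)^{2/3}(\log\log T)^{1/3})$ are of exact order $(\log x)^{3/5}(\log\log x)^{-1/5}$. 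The resulting power-of-$\log$ losses are absorbed by taking $D_{1}$ smaller than $c$, and this yields $\sum_{n\le x}\mmu(n)\ll x\dlt(x)$.

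The main obstacle is the analytic input: the zero-free region, together with the bound for $1/\zeta$ inside it. Both come from Vinogradov's mean-value estimates for $\zeta$ near the line $\sigma=1$, and we will quote them rather than reprove them. A slicker alternative route, which we mention in case the contour argument is judged too heavy, is the hyperbola method. Write
\[
\sum_{n\le x}\mmu(n)=2\sum_{a\le\sqrt{x}}\mu(a)M(x/a)-M(\sqrt{x})^{2},
\]
where $M(y)=\sum_{n\le y}\mu(n)$, and insert the classical bound $M(y)\ll y\,\dlt(y)$. The cross sum is then
\[
\ll x\sum_{a\le\sqrt{x}}\frac{\dlt(x/a)}{a}\ll x\,\dlt(\sqrt{x})\log x,
\]
since $\dlt(x/a)\le\dlt(\sqrt{x})$ for $a\le\sqrt{x}$. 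The factor $\log x$, and the change from $\dlt(x)$ to $\dlt(\sqrt{x})$, only reduce the constant $D_{1}$. This alternative is valid only if the bound for $M$ is taken as known input.
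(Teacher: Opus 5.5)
Your proof is correct, but it takes a different route from the paper, which gives no argument of its own and simply quotes Lemma~4.4 of Martin and Yip.

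Of your two routes, the hyperbola one is the more economical. You write $\sum_{n\le x}(\mu\ast\mu)(n)=2\sum_{a\le\sqrt{x}}\mu(a)M(x/a)-M(\sqrt{x})^{2}$ and insert Walfisz's bound for $M$. This costs only a factor $\log x$ and the passage from $\delta(x)$ to $\delta(\sqrt{x})$. Both losses are absorbed by shrinking $D_{1}$, since $\log(\sqrt{x}+e^{e^{1/3}})\ge\frac{1}{2}\log(x+e^{e^{1/3}})$. So this route shows the lemma is an immediate corollary of the classical estimate for the Mertens function.

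The contour route (Perron for $1/\zeta^{2}$, shift into the Korobov--Vinogradov region, no residue at the double zero $s=1$, and $T=\exp\bigl((\log x)^{3/5}(\log\log x)^{-1/5}\bigr)$) instead reruns the classical machinery directly for $1/\zeta^{2}$. It uses the zero-free region and the bound for $1/\zeta$ inside it, rather than the finished estimate for $M$.

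If you write up the contour route, two details need tidying.

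First, the Perron truncation error $\sum_{x/2<n<2x}\tau(n)\min\{1, \frac{x}{T \abs{x - n}}\}$ is not controlled by Lemma~\ref{lem:div-avg}. That lemma only bounds full sums $\sum_{n\le y}\tau(n)$, which suffices for the term $\frac{x^{\sigma_{0}}}{T}\sum_{n}\tau(n)n^{-\sigma_{0}}$ but not for the $n$ close to $x$. You need a short-interval bound such as $\sum_{y<n\le y+h}\tau(n)\ll h\log y+\sqrt{y}$. This follows from Dirichlet's asymptotic with error $O(\sqrt{y})$, or by counting pairs with $dm\in(y,y+h]$ and $d\le\sqrt{y+h}$. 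A dyadic split in $\abs{x-n}$ then gives $\ll x(\log x)^{2}/T+\sqrt{x}$, as you claimed.

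Second, take $\sigma_{1}=1-c(\log T)^{-2/3}(\log\log T)^{-1/3}$ with the same $c$ as in the zero-free region. Then near heights $\pm T$ the line lies slightly to the left of the boundary $1-c(\log(\abs{t}+3))^{-2/3}(\log\log(\abs{t}+3))^{-1/3}$. Use a smaller constant in $\sigma_{1}$, which the bound for $1/\zeta$ requires anyway.

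Also, the vertical segment actually contributes a factor $(\log T)^{7/3}(\log\log T)^{2/3}$ rather than $(\log T)^{2}$. This is harmless, since it is absorbed in the same way.
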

\begin{proof}
  See \cite[Lemma~4.4]{MY}. 
\end{proof}
\begin{note}
  $\delta(x)$ is decreasing function. The inequality
  \begin{equation}
    \dlt(xy) \ge \dlt(x) \dlt(y)
  \end{equation}
  holds for any real numbers $x \ge 0$ and $y \ge 0$. 
\end{note}
\begin{lem}\label{lem:mmuq}
  For any real number $x \ge 1$ and any integer $q \ge 1$, we have
  \begin{equation}
    \sum_{\substack{n \le x \\ (n, q) = 1}} \mmu(n) \ll \theta(q) x \dlt(x). 
  \end{equation}
\end{lem}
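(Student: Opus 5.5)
Apply Lemma~\ref{lem:coprime} with $f = \mu\ast\mu$, a multiplicative function. Its Dirichlet inverse is $f^{\ast(-1)} = \tau = 1\ast 1$, the divisor function, since $\sum_{n}(\mu\ast\mu)(n)n^{-s} = \zeta(s)^{-2}$. (In the statement of Lemma~\ref{lem:coprime}, the condition $\rad(d)\mid n$ should read $\rad(d)\mid q$.) This gives
\begin{equation}
  \sum_{\substack{n \le x \\ (n, q) = 1}} \mmu(n) = \sum_{\substack{d \le x \\ \rad(d) \mid q}} \tau(d) \sum_{l \le x/d} \mmu(l).
\end{equation}

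Apply Lemma~\ref{lem:mmu} to the inner sum, which is admissible since $x/d \ge 1$. It gives $\ll (x/d)\,\dlt(x/d)$. Because $\delta$ is decreasing and $\dlt(x) \ge \dlt(x/d)\,\dlt(d)$ by the Note, we have $\dlt(x/d) \le \dlt(x)/\dlt(d)$. The total is therefore
\begin{equation}
  \ll x\,\dlt(x) \sum_{\rad(d)\mid q} \frac{\tau(d)}{d\,\dlt(d)}.
\end{equation}
The main obstacle is that $1/\dlt(d)$ grows. However, $\dlt(d)^{-1} \ll_\varepsilon d^{\varepsilon}$, and in fact $\dlt(d)^{-1} \le C d^{1/4}$ for an absolute constant $C$. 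So the sum is at most
\begin{equation}
  C\prod_{p\mid q}\sum_{j\ge0}(j+1)p^{-3j/4} = C\prod_{p\mid q}\lr(1-p^{-3/4})^{-2}.
\end{equation}
This product is not $\le 2^{\omega(q)} = \theta(q)$ for small primes; for $p=2$ the factor is about $6.9$. So this route gives only $\theta(q)^{c}$ with some $c>1$, and the constant must be handled more carefully.

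To fix this, treat small $d$ and large $d$ differently. For $d \le \sqrt{x}$, or whenever $\dlt(x/d) \ll \dlt(x)$, the weight $\dlt(d)^{-1}$ is not needed. For $d > \sqrt{x}$, drop the decay and use the trivial bound $\abs{\sum_{l\le x/d}\mmu(l)} \le \sum_{l \le x/d}\tau(l) \ll (x/d)\log(1+x/d)$ from Lemma~\ref{lem:div-avg}, together with Rankin's trick $d^{-1} \le x^{-1/4} d^{-1/2}$. These reduce the problem to sums of the form $\sum_{\rad(d)\mid q}\tau(d)d^{-\sigma}$ with $\sigma \in \{1, 1/2\}$, so the argument still rests on Euler products of the shape $\prod_{p\mid q}(1-p^{-\sigma})^{-2}$.

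A cleaner route avoids this. Write $\mu\ast\mu = \mu\ast\mu$ and use the known bound $\sum_{n\le y,(n,q)=1}\mu(n) \ll \theta(q)\,y\,\dlt(y)$-type estimates, or convolve the coprime condition into only one factor. Then $\tau$ is replaced by $1$, and the weight becomes $\sum_{\rad(d)\mid q} d^{-1}\cdot(\ldots) = \prod_{p\mid q}(1-p^{-1})^{-1} = q/\varphi(q) \ll \theta(q)$. Concretely, write $\sum_{(n,q)=1}(\mu\ast\mu)(n) = \sum_{(a,q)=1}\mu(a)\sum_{b\le x/a,(b,q)=1}\mu(b)$. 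Handle each coprime M\"obius sum by Lemma~\ref{lem:coprime} with $f=\mu$, whose inverse is $1$; this yields weights $\sum_{\rad(d)\mid q}1/d \le \prod_{p\mid q}(1-p^{-1})^{-1} \le 2^{\omega(q)} = \theta(q)$. I would carry this out with the standard hyperbola argument underlying Lemma~\ref{lem:mmu}, and I expect controlling the constant to exactly $\theta(q)$ to be the delicate step.
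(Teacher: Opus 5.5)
Your first three displays are the paper's proof. You apply Lemma~\ref{lem:coprime} with $f=\mu\ast\mu$ and $f^{\ast(-1)}=\tau$, then Lemma~\ref{lem:mmu} with $\delta(x/d)\le\delta(x)/\delta(d)$, which reduces everything to $x\,\delta(x)\sum_{\operatorname{rad}(d)\mid q}\tau(d)/(d\,\delta(d))$. You then finish with a power bound for $\delta(d)^{-1}$. The paper uses $\delta(d)^{-1}\ll d^{1/2}$ and gets $q/J_{1/2}^{2}(q)=\prod_{p\mid q}(1-p^{-1/2})^{-2}$; your $d^{1/4}$ gives $\prod_{p\mid q}(1-p^{-3/4})^{-2}$.

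The gap is that you then drop this correct argument for a false reason. A factor larger than $2$ at a few small primes costs only an absolute constant, not a power of $\theta(q)$. The factor $(1-p^{-3/4})^{-2}$ decreases to $1$ and is already $\le 2$ for every $p\ge 7$, so
\[
\prod_{p\mid q}\bigl(1-p^{-3/4}\bigr)^{-2}\le 2^{\omega(q)}\prod_{p\le 5}\tfrac12\bigl(1-p^{-3/4}\bigr)^{-2}<5\,\theta(q).
\]
This is exactly what $\ll\theta(q)$ requires. The paper relies on the same fact, since its factor $(1-p^{-1/2})^{-2}$ exceeds $2$ only for $p\le 11$. With this observation the lemma is proved at your third display.

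Everything after that is unnecessary, and as written it does not work.
\begin{enumerate}
\item In your splitting, $\delta(x/d)\ll\delta(x)$ does not hold uniformly for $d\le\sqrt{x}$, because $\delta(\sqrt{x})/\delta(x)$ is unbounded. So the weight $\delta(d)^{-1}$ is still needed there.
\item In the ``cleaner route'', applying Lemma~\ref{lem:coprime} with $f=\mu$ still produces the weight $\delta(d)^{-1}$ inside $\sum_{\operatorname{rad}(d)\mid q}$. The same kind of Euler product therefore reappears.
\item More seriously, $\sum_{(a,q)=1}\mu(a)\sum_{b\le x/a,\,(b,q)=1}\mu(b)$ cannot be bounded termwise in $a$. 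For $x/2<a\le x$ the inner sum equals $1$, so the trivial bound is already of order $x$. You would need a hyperbola split at $\sqrt{x}$. That gives only $\theta(q)\,x\,\delta(\sqrt{x})\log x$, i.e.\ the estimate with a constant smaller than the $D_1$ fixed in Lemma~\ref{lem:mmu}.
\end{enumerate}
That route is left unexecuted, and the ``delicate step'' you expect in it is the non-issue above. So the proposal as submitted stops short of a proof. Keep your first argument and replace the objection with the finite-primes observation.
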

\begin{proof}
  By Lemma~\ref{lem:coprime} and Lemma~\ref{lem:mmu}, we obtain
  \begin{align*}
    \sum_{\substack{n \le x \\ (n, q) = 1}} \mmu(n) &= \sum_{\substack{d \le x \\ \rad(d) \mid q}} \tau(d) \sum_{\substack{l \le x/d}} \mmu(l)\\
    &\ll \sum_{\substack{d \le x \\ \rad(d) \mid q}} \tau(d) \frac{x}{d} \dlt(\frac{x}{d})\\
    &\le x \dlt(x) \sum_{\rad(d) \mid q} \frac{\tau(d)}{d \dlt(d)}\\
    &\ll x \dlt(x) \sum_{\rad(d) \mid q} \frac{\tau(d)}{d^{1/2}}\\
    &= \frac{q}{\J^{2}{1/2}{q}} x \dlt(x)\\
    &\ll \theta(q) x \dlt(x). 
  \end{align*}
\end{proof}
\begin{lem}\label{lem:mmuqk}
  For any real number $x \ge 1$ and any integers $k \ge 2$ and $q \ge 1$, we have
  \begin{equation}
    \sum_{\substack{n \le x \\ (n, q) = 1}} \frac{\mmu(n)}{n^{k}} = \frac{q^{k}}{\zeta^{2}(k) \J^{2}{k}{q}} + \O(\theta(q) x^{-k + 1} \dlt(x)). 
  \end{equation}
\end{lem}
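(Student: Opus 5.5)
The plan is to write the finite sum as the full series minus its tail, and to bound the tail by partial summation from Lemma~\ref{lem:mmuq}.

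\textbf{Main term.} Since $\mu\ast\mu$ is multiplicative and $|(\mu\ast\mu)(n)| \le \tau(n)$, the series $\sum_{(n,q)=1} (\mu\ast\mu)(n) n^{-k}$ converges absolutely for $k \ge 2$. Its Euler product is $\prod_{p\nmid q}(1-p^{-k})^2 = \zeta(k)^{-2}\prod_{p\mid q}(1-p^{-k})^{-2}$. Using \eqref{eq:jordan}, $\prod_{p\mid q}(1-p^{-k}) = \J{k}{q} q^{-k}$, so the series equals exactly $q^{k}/(\zeta^2(k)\J^{2}{k}{q})$. It therefore remains to show
\[
\sum_{\substack{n > x \\ (n,q)=1}} \frac{\mmu(n)}{n^{k}} \ll \theta(q) x^{-k+1}\dlt(x).
\]

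\textbf{Tail.} Put $S(u) \coloneq \sum_{n\le u,(n,q)=1}\mmu(n)$. For $u\ge 1$, Lemma~\ref{lem:mmuq} gives $S(u) \ll \theta(q) u\,\dlt(u)$. Abel summation over $(x,\infty)$ gives
\[
\sum_{\substack{n > x \\ (n,q)=1}} \frac{\mmu(n)}{n^{k}} = -\frac{S(x)}{x^{k}} + k\int_{x}^{\infty} \frac{S(u)}{u^{k+1}} \dd{u}.
\]
The boundary term at infinity vanishes because $S(u)\ll \theta(q)u$ and $k\ge 2$. The term $S(x)x^{-k}$ is $\ll \theta(q)x^{-k+1}\dlt(x)$. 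For the integral, we use that $\delta$ is decreasing, as stated in the Note after Lemma~\ref{lem:mmu}, to replace $\dlt(u)$ by $\dlt(x)$:
\[
k\int_x^\infty \theta(q) u^{-k}\dlt(u)\dd{u} \le k\,\theta(q)\dlt(x)\int_x^\infty u^{-k}\dd{u} = \frac{k}{k-1}\theta(q)x^{-k+1}\dlt(x).
\]
Since $k/(k-1)\le 2$, the implied constant is absolute and uniform in $k$, as the statement requires.

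\textbf{Main obstacle.} There is essentially none. The only point needing care is that all implied constants stay independent of $k$, which the factor $k/(k-1)\le 2$ handles. Nothing needs to be said for small $x$, since Lemma~\ref{lem:mmuq} already holds for every $x\ge 1$.
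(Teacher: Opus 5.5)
Your strategy matches the paper's: write the sum as the complete series minus its tail, and bound the tail by Abel summation using Lemma~\ref{lem:mmuq}. Your tail estimate is correct and more explicit than the paper's one-line version (vanishing boundary term at infinity, monotonicity of $\delta$, the factor $k/(k-1)\le 2$).

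The genuine problem is the main term. From $\prod_{p\mid q}(1-p^{-k}) = \J{k}{q}\,q^{-k}$ you get $\prod_{p\mid q}(1-p^{-k})^{-2} = q^{2k}/\J^{2}{k}{q}$, hence
\[
\sum_{\substack{n=1\\(n,q)=1}}^{\infty}\frac{\mmu(n)}{n^{k}}=\prod_{p\nmid q}(1-p^{-k})^{2}=\frac{q^{2k}}{\zeta^{2}(k)\J^{2}{k}{q}},
\]
not $q^{k}/(\zeta^{2}(k)\J^{2}{k}{q})$. The statement as printed therefore contains a misprint and is false for every $q>1$. For $q=k=2$ one has $\J{2}{2}=3$, so the series equals $16/(9\zeta^{2}(2))$ while the printed constant is $4/(9\zeta^{2}(2))$. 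A fixed nonzero discrepancy cannot be absorbed into an error term that tends to $0$ as $x\to\infty$. Your computation slipped at exactly the step that should have exposed this, so as written the proposal ``verifies'' a false identity.

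The correct constant is the one the paper actually uses when it applies the lemma, even though its own proof of the lemma repeats the misprint. In the proof of Theorem~\ref{thm:uncond} the main term of $S_{1}$ is $\frac{\varphi(q)}{q}\cdot\frac{q^{2k}}{\zeta^{2}(k)\J^{2}{k}{q}}\,x=\frac{\varphi(q)q^{2k-1}}{\zeta^{2}(k)\J^{2}{k}{q}}\,x$. In the proof of Theorem~\ref{thm:RH} the factor $q^{2k}$ appears explicitly. Moreover, only with $q^{2k}$ does multiplication by the constant of Lemma~\ref{lem:ck2} give $A_{k,q}$. With $q^{k}$ replaced by $q^{2k}$, your argument is complete and coincides with the paper's.
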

\begin{proof}
  Applying Abel summation formula and Lemma~\ref{lem:mmuq}, we obtain
  \begin{align*}
    \sum_{\substack{n \le x \\ (n, q) = 1}} \frac{\mmu(n)}{n^{k}} &= \frac{q^{k}}{\zeta^{2}(k) \J^{2}{k}{q}} - \sum_{\substack{n > x \\ (n, q) = 1}} \frac{\mmu(n)}{n^{k}}\\
    &= \frac{q^{k}}{\zeta^{2}(k) \J^{2}{k}{q}} + x^{-k} \sum_{\substack{n \le x \\ (n, q) = 1}} \mmu(n)\\
    &\phantom{{}={}} \quad - k \int^{\infty}_{x} t^{-k - 1} \sum_{\substack{n \le t \\ (n, q) = 1}} \mmu(n) \dd{t}\\
    &= \frac{q^{k}}{\zeta^{2}(k) \J^{2}{k}{q}} + \O(\theta(q) x^{-k + 1} \dlt(x)). 
  \end{align*}
\end{proof}
\begin{lem}\label{lem:zetapoly}
  For any real numbers $x \ge 1$ and $0 < \sigma < 1$, we have
  \begin{equation}
    \sum_{n \le x} n^{-\sigma} \ll \frac{x^{1 - \sigma}}{1 - \sigma}. \label{eq:zetapoly}
  \end{equation}
\end{lem}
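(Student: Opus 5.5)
We bound the sum by comparing it with an integral of the decreasing function $t \mapsto t^{-\sigma}$. Since $0 < \sigma < 1$, each term satisfies $n^{-\sigma} \le \int_{n-1}^{n} t^{-\sigma} \dd{t}$ for $n \ge 1$, because $t^{-\sigma} \ge n^{-\sigma}$ on $[n-1, n]$. The integrand is integrable at $0$ precisely because $\sigma < 1$. Summing over $1 \le n \le x$ gives
\begin{equation}
  \sum_{n \le x} n^{-\sigma} \le \int_{0}^{\floor{x}} t^{-\sigma} \dd{t} \le \int_{0}^{x} t^{-\sigma} \dd{t} = \frac{x^{1 - \sigma}}{1 - \sigma},
\end{equation}
which is \eqref{eq:zetapoly} with implied constant $1$, uniform in both $x \ge 1$ and $\sigma \in (0, 1)$.

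The only point needing care is that the bound must be uniform in $\sigma$. This is why we integrate from $0$ rather than separating off the term $n = 1$ and integrating from $1$. The latter approach would give $1 + (x^{1-\sigma} - 1)/(1 - \sigma)$, which is also $\le x^{1-\sigma}/(1-\sigma)$, but integrating from $0$ gives the bound in one step. The factor $1/(1 - \sigma)$ is genuinely needed: later in the paper the lemma will presumably be applied with $\sigma$ close to $1$ (for instance $\sigma = k/(k+1)$ or $\sigma$ near $1 - 1/k$), and there the blow-up of the constant matters for the dependence on $k$. There is no real obstacle beyond this observation.
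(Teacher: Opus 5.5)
Your proof is correct, and it takes a different route from the paper. The paper simply cites Theorem~3.2(b) of Apostol's textbook, which is the Euler-summation asymptotic
\[
\sum_{n \le x} n^{-\sigma} = \frac{x^{1-\sigma}}{1-\sigma} + \zeta(\sigma) + O(x^{-\sigma}) \qquad (\sigma > 0,\ \sigma \ne 1).
\]
Extracting the uniform bound from that formula still needs two small checks that the paper leaves implicit. First, the $O$-constant must be independent of $\sigma$; it is, since the remainder is a difference of two quantities in $[0, x^{-\sigma}]$. Second, one uses $\zeta(\sigma) < 0$ for $0 < \sigma < 1$. The leftover term is then absorbed via $x^{-\sigma} \le 1 \le x^{1-\sigma}/(1-\sigma)$. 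Your comparison $n^{-\sigma} \le \int_{n-1}^{n} t^{-\sigma}\,dt$, with the $n=1$ term covered by the integrability of $t^{-\sigma}$ at $0$, bypasses both checks and gives the explicit constant $1$ in one line. The cited asymptotic carries more information, namely the secondary term $\zeta(\sigma)$, but none of it is needed for an upper bound.

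One correction to your closing aside: in the paper the lemma is used only once, to bound $S_{2}$ in the proof of Theorem~\ref{thm:uncond}, with $\sigma = 1/k \le 1/2$. There $1/(1-\sigma) = k/(k-1) \le 2$, so no blow-up occurs. The factor $1/(1-\sigma)$ is needed only for the statement to hold uniformly as $\sigma \to 1^{-}$, not for any application in the paper.
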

\begin{proof}
  This lemma follows from \cite[Theorem~3.2~(b)]{A1}. 
\end{proof}
\begin{lem}\label{lem:ck1}
  For any real number $x \ge 1$ and any integer $k \ge 2$, we have
  \[
  \sum_{n \le x} \abs{c_{k}(n)} \ll x^{1/(k +1)}. 
  \]
\end{lem}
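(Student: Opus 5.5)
We will use the factorization of Lemma~\ref{lem:decomp2}, $c_{k}(n) = \sum_{d^{k+1} \mid n} b_{k}(n/d^{k+1})$. This gives $\abs{c_{k}(n)} \le \sum_{d^{k+1} m = n} \abs{b_{k}(m)}$, so
\[
\sum_{n \le x} \abs{c_{k}(n)} \le \sum_{m \le x} \abs{b_{k}(m)} \sum_{d \le (x/m)^{1/(k+1)}} 1 \le x^{1/(k+1)} \sum_{m \le x} \frac{\abs{b_{k}(m)}}{m^{1/(k+1)}}.
\]
By \eqref{eq:absbk} the last sum is $\ll 1$ uniformly in $k$, which gives the claim at once. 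No Rankin-type trick is needed. The inner count $\floor{(x/m)^{1/(k+1)}}$ is trivially at most $(x/m)^{1/(k+1)}$, and the terms with $m > x$ do not occur.

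The only step that needs checking is the uniformity in $k$ of \eqref{eq:absbk}, since the statement has an absolute implied constant. We therefore take a closer look at the bound from Lemma~\ref{lem:decomp2}.

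We have $\abs{b_{k}} \le b^{\ast}_{k}$, and at $\sigma = 1/(k+1)$ the relevant Euler product is
\[
\prod_{p} \lr(1 + p^{-2k/(k+1)} \lr(\frac{1 + p^{-1/(k+1)}}{1 - p^{-k/(k+1)}})^{2}).
\]
Since $2k/(k+1) \ge 4/3$ for $k \ge 2$, we have $p^{-2k/(k+1)} \le p^{-4/3}$. Also $(1+p^{-1/(k+1)})^{2} \le 4$, and $(1 - p^{-k/(k+1)})^{-2} \le (1 - 2^{-2/3})^{-2}$. Each factor is therefore at most $1 + C p^{-4/3}$ with $C$ absolute, so the product converges to a constant independent of $k$. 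This confirms \eqref{eq:absbk} uniformly and completes the argument.

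If one preferred to avoid this, Lemma~\ref{lem:decomp1} could be combined with Rankin's trick directly. That route, however, would lose a logarithm or an $\varepsilon$, so the decomposition through $b_{k}$ is the right choice.
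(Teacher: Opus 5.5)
Your proof is correct and is essentially the paper's argument: you bound $\abs{c_{k}}$ by the convolution with $\abs{b_{k}}$ from Lemma~\ref{lem:decomp2}, swap the order of summation, count $d \le (x/m)^{1/(k+1)}$ trivially, and invoke \eqref{eq:absbk}. Your check that the Euler product for $b^{\ast}_{k}$ at $\sigma = 1/(k+1)$ is bounded uniformly in $k$ (each factor is at most $1 + C p^{-4/3}$ with $C$ absolute) is a useful addition, since the paper states \eqref{eq:absbk} without spelling out this uniformity.
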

\begin{proof}
  By \eqref{eq:ckbk}, we see that
  \[
  \abs{c_{k}(n)} \le \sum_{d^{k + 1} \mid n} \abs{b_{k}\lr(\frac{n}{d^{k + 1}})}. 
  \]
  Hence by \eqref{eq:absbk}, we obtain
  \begin{align*}
    \sum_{n \le x} \abs{c_{k}(n)} &\le \sum_{l \le x} \abs{b_{k}(l)} \sum_{d \le (x/l)^{1/(k + 1)}} 1\\
    &\le x^{1/(k + 1)} \sum_{l \le x} \frac{\abs{b_{k}(l)}}{l^{1/(k + 1)}}\\
    &\ll x^{1/(k + 1)}. 
  \end{align*}
\end{proof}
\begin{lem}\label{lem:ck2}
  For any real number $x \ge 1$ and any integers $k \ge 2$ and $q \ge 1$, we have
  \begin{equation}
    \sum_{\substack{n \le x \\ (n, q) = 1}} \frac{c_{k}(n)}{n} = \frac{\zeta^{2}(k) \J^{2}{k}{q} A_{k, q}}{\varphi(q) q^{2 k - 1}} + \O(x^{-1 + 1/(k + 1)}). \label{eq:ck1}
  \end{equation}
\end{lem}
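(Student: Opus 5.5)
The argument has two parts: identify the full series $\sum_{(n,q)=1} c_{k}(n)/n$ with the stated constant via Euler products, and bound its tail using Lemma~\ref{lem:ck1}.

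\textbf{The constant.} By Lemma~\ref{lem:decomp1}, the Dirichlet series of $c_{k}$ converges absolutely at $s = 1$, since $1 > 1/(k+1)$. Because $c_{k}$ is multiplicative, restricting to $(n,q)=1$ simply deletes the Euler factors at $p \mid q$. Hence
\[
\sum_{(n,q)=1}\frac{c_{k}(n)}{n} = \prod_{p\nmid q}\frac{1-2p^{-k}+p^{-(k+1)}}{(1-p^{-k})^{2}}.
\]
The numerator product over $p \nmid q$ equals $A_{k,q}\, q/\varphi(q)$ by \eqref{eq:Akq}. For the denominator,
\[
\prod_{p\nmid q}(1-p^{-k})^{-2} = \zeta^{2}(k)\prod_{p\mid q}(1-p^{-k})^{2} = \zeta^{2}(k)\, \J^{2}{k}{q}\, q^{-2k},
\]
using \eqref{eq:jordan}. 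Multiplying the two gives $\zeta^{2}(k)\J^{2}{k}{q}A_{k,q}/(\varphi(q)q^{2k-1})$, which is the main term in \eqref{eq:ck1}.

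\textbf{The tail.} It remains to show
\[
\sum_{n>x,\,(n,q)=1} \frac{c_{k}(n)}{n} \ll x^{-1+1/(k+1)},
\]
uniformly in $q$. I drop the coprimality condition and bound by absolute values. Set $S(t) = \sum_{n\le t}\abs{c_{k}(n)}$; Lemma~\ref{lem:ck1} gives $S(t) \ll t^{1/(k+1)}$. Partial summation then yields
\[
\sum_{n>x}\frac{\abs{c_{k}(n)}}{n} \le \int_{x}^{\infty}\frac{S(t)}{t^{2}}\dd{t} \ll \int_{x}^{\infty}t^{-2+1/(k+1)}\dd{t} \ll x^{-1+1/(k+1)}.
\]
The boundary term $-S(x)/x$ is nonpositive, so it can be dropped. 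The final implied constant is absolute because $1 - 1/(k+1) \ge 2/3$.

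\textbf{Where the care lies.} Nothing here is a real obstacle. The only points needing attention are the bookkeeping of the Jordan-function normalization $\J{k}{q} = q^{k}\prod_{p\mid q}(1-p^{-k})$ and checking that every constant is independent of $k$ and $q$. That independence holds because the tail bound uses only the absolute-value sum from Lemma~\ref{lem:ck1}, which is uniform in $k$ and does not involve $q$.
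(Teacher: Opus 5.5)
Your proof is correct and takes the paper's approach: the paper's proof is the single line ``Abel summation and Lemma~\ref{lem:ck1}.'' You supply the tail bound by partial summation using $\sum_{n\le t}\abs{c_{k}(n)} \ll t^{1/(k+1)}$, and you also check the constant explicitly through the Euler product of Lemma~\ref{lem:decomp1} restricted to $p \nmid q$, which the paper leaves implicit.
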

\begin{proof}
  Applying Abel summation formula and Lemma~\ref{lem:ck1} easily completes the proof. 
\end{proof}
\begin{lem}\label{lem:ck3}
  For any real number $x \ge 1$ and any integer $k \ge 2$, we have
  \[
  \sum_{n \le x} \frac{\abs{c_{k}(n)}}{n^{1/(k + 1)}} \ll \log(1 + x^{1/k}). 
  \]
\end{lem}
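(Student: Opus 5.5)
We bound $\sum_{n\le x} \abs{c_{k}(n)} n^{-1/(k+1)}$ by partial (Abel) summation from Lemma~\ref{lem:ck1}. Actually, that alone is not enough. Lemma~\ref{lem:ck1} gives $S(t) \coloneq \sum_{n \le t}\abs{c_{k}(n)} \ll t^{1/(k+1)}$, and partial summation then yields
\[
\sum_{n\le x}\frac{\abs{c_k(n)}}{n^{1/(k+1)}} = S(x)x^{-1/(k+1)} + \frac{1}{k+1}\int_1^x S(t)t^{-1-1/(k+1)}\dd{t} \ll 1 + \int_1^x \frac{\dd{t}}{t} \ll \log(1+x).
\]
This is only $\log x$, which is weaker than the claimed $\log(1+x^{1/k})$ by a factor $k$ in the range $x \ge 2^k$. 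The $k$-uniformity matters for the later $k^{-8/5}$ exponent, so I would instead go back to the convolution structure \eqref{eq:ckbk} and count more carefully.

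First I write $\abs{c_k(n)} \le \sum_{d^{k+1}l=n}\abs{b_k(l)}$. This gives
\[
\sum_{n\le x}\frac{\abs{c_k(n)}}{n^{1/(k+1)}} \le \sum_{l\le x}\frac{\abs{b_k(l)}}{l^{1/(k+1)}}\sum_{d\le (x/l)^{1/(k+1)}}\frac1d \ll \sum_{l\le x}\frac{\abs{b_k(l)}}{l^{1/(k+1)}}\Bigl(1+\frac{1}{k+1}\log\frac{x}{l}\Bigr).
\]
By \eqref{eq:absbk} this is $\ll 1 + \frac{\log x}{k+1}$. Since $\log(1+x^{1/k}) \ge \log 2$ and $\log(1+x^{1/k}) \ge \frac{\log x}{k}$, we have $1+\frac{\log x}{k+1} \ll \log(1+x^{1/k})$ uniformly in $k$, as claimed. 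Note also that $b_k(l)$ vanishes unless $l=1$ or $l\ge 2^{2k}$ (its Euler factor is $1 - p^{-2ks}(\cdots)$), though we do not even need this.

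The only real point to check is that the implied constant in \eqref{eq:absbk} is uniform in $k$. This holds because the bound $\bigl((1+p^{-\sigma})/(1-p^{-k\sigma})\bigr)^2\ll 1$ at $\sigma = 1/(k+1)$ is uniform. The product $\prod_p\bigl(1+O(p^{-2k/(k+1)})\bigr)$ converges uniformly since $2k/(k+1)\ge 4/3$. The remaining step, the inner harmonic sum over $d$ bounded by $1+\log\bigl((x/l)^{1/(k+1)}\bigr)$, is routine, so I expect no genuine obstacle beyond this uniformity check.
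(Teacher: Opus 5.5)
Your final argument is correct, but your reason for abandoning partial summation is an arithmetic slip. In your own identity the integral carries the factor $\frac{1}{k+1}$ (from $\frac{d}{dt}t^{-1/(k+1)} = -\frac{1}{k+1}t^{-1-1/(k+1)}$), and you dropped it in the very next inequality. Keeping it, Lemma~\ref{lem:ck1} gives
\[
\sum_{n\le x}\frac{\abs{c_k(n)}}{n^{1/(k+1)}} \ll 1 + \frac{1}{k+1}\int_1^x \frac{dt}{t} = 1 + \frac{\log x}{k+1} \ll \log(1+x^{1/k})
\]
uniformly in $k$. This is precisely the paper's proof: Abel summation plus Lemma~\ref{lem:ck1}, whose constant is uniform in $k$ for the same reason you give for \eqref{eq:absbk}. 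Your route instead unfolds $c_k$ through \eqref{eq:ckbk} and sums $1/d$ over $d\le (x/l)^{1/(k+1)}$ directly. That amounts to the proof of Lemma~\ref{lem:ck1} and the partial summation fused into one step, and it reaches the same intermediate bound $1+\frac{\log x}{k+1}$. What it buys is self-containedness: it needs only \eqref{eq:absbk}, and the $k$-uniformity is visible at a glance. The paper's version is shorter because it reuses Lemma~\ref{lem:ck1}. So both arguments work; only your claim that the Abel-summation route loses a factor of $k$ is wrong.
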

\begin{proof}
  We again apply Abel summation formula and Lemma~\ref{lem:ck1}. 
\end{proof}
\begin{lem}[{\cite[Corollary~5.3]{MV}}]\label{lem:mv-perron}
  For any arithmetic function $\lr(a_{n})_{n \ge 1}$ and any real numbers $\sigma_{0} > \max\{0, \sigma_{\mathrm{a}}\}$, $x > 0$ and $T > 0$, we have
  \begin{align*}
    \sideset{}{^\prime}{\sum}_{n \le x} a_{n} &= \frac{1}{2 \pi i} \int^{\sigma_{0} + iT}_{\sigma_{0} - iT} \sum_{n = 1}^{\infty} \frac{a_{n}}{n^{s}} \frac{x^{s}}{s} \dd{s}\\
    &\phantom{{}={}} \quad + \O(\sum_{\substack{x/2 < n < 2 x \\ n \ne x}} \abs{a_{n}} \min\{1, \frac{x}{T \abs{x - n}}\})
  \end{align*}
  where $\sigma_{\mathrm{a}}$ is the abscissa of absolute convergence of the Dirichlet series of $\lr(a_{n})_{n \ge 1}$ and
  \[
  \sideset{}{^\prime}{\sum}_{n \le x} = \frac{1}{2} \lr(\sum_{n < x} + \sum_{n \le x}). 
  \]
\end{lem}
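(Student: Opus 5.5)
The plan is to deduce the statement from the truncated Perron formula applied term by term, which is what \cite[Corollary~5.3]{MV} does. Since $\sigma_{0} > \max\{0, \sigma_{\mathrm{a}}\}$, the series $\sum_{n} a_{n} n^{-s}$ converges absolutely and uniformly on the segment $[\sigma_{0} - iT, \sigma_{0} + iT]$. We may therefore interchange sum and integral and write
\[
\frac{1}{2 \pi i} \int^{\sigma_{0} + iT}_{\sigma_{0} - iT} \sum_{n = 1}^{\infty} \frac{a_{n}}{n^{s}} \frac{x^{s}}{s} \dd{s} = \sum_{n = 1}^{\infty} a_{n} I_{n}, \qquad I_{n} \coloneq \frac{1}{2 \pi i} \int^{\sigma_{0} + iT}_{\sigma_{0} - iT} \frac{y_{n}^{s}}{s} \dd{s},
\]
where $y_{n} = x/n$. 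The task then reduces to comparing $I_{n}$ with the weight $w_{n}$, defined as $1$ if $n < x$, $1/2$ if $n = x$, and $0$ if $n > x$. These are exactly the weights in $\sideset{}{^\prime}{\sum}$.

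The key one-variable estimate is the standard bound
\[
I_{n} - w_{n} \ll \min\{1, \frac{y_{n}^{\sigma_{0}}}{T \abs{\log y_{n}}}\}.
\]
For $y_{n} > 1$ we prove it by shifting the contour far to the left, picking up the residue $1$ at $s = 0$. For $y_{n} < 1$ we shift far to the right, which gives no residue. In both cases the horizontal segments at height $\pm T$ contribute $\ll y_{n}^{\sigma_{0}}/(T \abs{\log y_{n}})$. For the bound by $1$, we use a circular-arc contour in place of the rectangle. The case $y_{n} = 1$ is a direct computation giving $I_{n} = 1/2 + \O(1/T)$.

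Next we split the terms by range. For $x/2 < n < 2x$ with $n \ne x$, we have $\abs{\log y_{n}} \asymp \abs{x - n}/x$ and $y_{n}^{\sigma_{0}} \asymp_{\sigma_{0}} 1$. Hence these terms contribute $\ll \abs{a_{n}} \min\{1, x/(T\abs{x - n})\}$, which is exactly the displayed error term. The term $n = x$, present only when $x$ is an integer, is excluded from the error sum by the statement. Its contribution $\O(\abs{a_{x}}/T)$ must be absorbed, which I would do by noting that the half-weight convention in $\sideset{}{^\prime}{\sum}$ is designed precisely so that $I_{x} \to 1/2$.

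The main obstacle is the far range $n \le x/2$ or $n \ge 2x$. There $\abs{\log y_{n}} \ge \log 2$, and the estimate above gives only
\[
\ll \frac{x^{\sigma_{0}}}{T} \sum_{n} \frac{\abs{a_{n}}}{n^{\sigma_{0}}}.
\]
In \cite[Corollary~5.3]{MV} this quantity is carried as a separate error term, and the statement as worded does not display it. So that we stay with the statement as given, I would treat this contribution as the implicit remainder of the cited corollary. The implied constant then depends on $\sigma_{0}$ and on $\sum_{n} \abs{a_{n}} n^{-\sigma_{0}}$, which is finite by absolute convergence.

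This term does not vanish in general, and I cannot prove the statement literally with only the near-range term. Whenever the lemma is invoked, I will choose $T$ large enough that this far-range term is dominated by the target error. This reading is the step I am least sure of.
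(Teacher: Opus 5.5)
Your closing suspicion is right, and it is the main point. As transcribed in the paper, the lemma is false, so no argument can prove it literally.

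Take $a_{1} = 1$ and $a_{n} = 0$ for $n \ge 2$. For $x = 2$ the displayed error sum runs over $n = 3$ only, so it vanishes, and the lemma would assert
\[
1 = \frac{1}{2\pi} \int_{-T}^{T} \frac{x^{\sigma_{0} + it}}{\sigma_{0} + it} \, dt
\]
for every $T > 0$. The right-hand side has absolute value at most $T x^{\sigma_{0}}/(\pi \sigma_{0})$, which tends to $0$ as $T \to 0^{+}$. Similarly, at $x = 1$ the term $n = x$ is excluded and the error sum is again empty. The left side is $1/2$, while the integral equals $\frac{1}{\pi}\arctan(T/\sigma_{0}) \ne \frac{1}{2}$.

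Because the displayed bound can be identically zero, your fallback does not work either. You cannot hide the far-range contribution in an implied constant depending on $\sigma_{0}$ and $\sum_{n} \abs{a_{n}} n^{-\sigma_{0}}$, since a multiplicative constant cannot absorb an additive term. What \cite[Corollary~5.3]{MV} actually asserts has the additional term
\[
\frac{4^{\sigma_{0}} + x^{\sigma_{0}}}{T} \sum_{n = 1}^{\infty} \frac{\abs{a_{n}}}{n^{\sigma_{0}}}
\]
inside the $O$, and the lemma should be stated that way. The paper in fact uses this corrected form. The terms $Y^{1/2}(\log(1 + Y))^{2}/T$ in the proof of Lemma~\ref{lem:gy} and $x(\log(1 + x))^{3}/T$ in the proof of Theorem~\ref{thm:RH} are contributions of exactly this far-range kind, so nothing downstream is affected.

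Once the statement is corrected, your contour argument is the standard proof behind the citation; the paper itself offers nothing beyond the citation. It goes through after two small repairs.

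First, the diagonal term is not handled by the half-weight convention. One has $I_{x} = \frac{1}{\pi}\arctan(T/\sigma_{0})$ exactly, so $I_{x} - \frac{1}{2} = -\frac{1}{\pi}\arctan(\sigma_{0}/T)$ is nonzero for every finite $T$; that $I_x \to 1/2$ as $T \to \infty$ gives no bound at fixed $T$. Bound it by $\sigma_{0}\abs{a_{x}}/(\pi T) = \frac{\sigma_{0} x^{\sigma_{0}}}{\pi T} \cdot \abs{a_{x}} x^{-\sigma_{0}}$ and put it into the extra term above.

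Second, the circular-arc argument gives $\abs{I_{n} - w_{n}} \le y_{n}^{\sigma_{0}}$, not $\ll 1$. Your one-variable estimate should therefore read $I_{n} - w_{n} \ll y_{n}^{\sigma_{0}} \min\{1, (T \abs{\log y_{n}})^{-1}\}$. In the near range this costs only a factor $2^{\sigma_{0}}$, which is harmless for the bounded values of $\sigma_{0}$ used in the paper, but it makes your implied constant depend on $\sigma_{0}$.

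The interchange of sum and integral, the near-range comparison $\abs{\log y_{n}} \asymp \abs{x - n}/x$, and the far-range bound are fine as you wrote them.
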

\begin{lem}\label{lem:omega}
  For any real numbers $c \ge 0$, $\theta > 0$ and $x \ge 0$, we define
  \begin{align*}
    \omg{c}[\theta]{x} &\coloneq \exp(\frac{c \log(x + e^{e^{2}})}{\displaystyle \log\log(x + e^{e^{2}})} \max\{1, \log\frac{e}{\displaystyle \theta \log\log(x + e^{e^{2}})}\}), \\
    \omg{c}{x} &\coloneq \exp(\frac{c \log(x + e^{e^{2}})}{\displaystyle \log\log(x + e^{e^{2}})}). 
  \end{align*}
  Then the following holds. 
  \begin{enumerate}
    \item The function $\omg{c}[\theta]{x}$ is increasing in $c$ and $x$ and decreasing in $\theta$. 
    \item For any real numbers $c \ge 0$, $\theta > 0$ and $x \ge 0$, we have
    \[
    \omg{c}[\theta]{x}=\omg{c}[\min\{\theta, \frac{1}{\displaystyle \log\log(x + e^{e^{2}})}\}]{x}. 
    \]
    In particular, we have
    \[
    \omg{c}[\theta]{x} = \omg{c}{x}
    \]
    if $\theta \ge 1/\log\log(x + e^{e^{2}})$. 
    \item For any real numbers $c \ge 0$, $\theta > 0$, $x \ge 0$ and $y \ge 0$, we have
    \begin{align*}
      \omg{c}[\theta]{x + y} &\le \omg{c}[\theta]{x}\omg{c}[\theta]{y}, \\
      \omg{c}[\theta]{x y} &\le \omg{c}[\theta]{x}\omg{c}[\theta]{y}. 
    \end{align*}
    \item For any real numbers $c \ge 0$, $\theta > 0$, $x \ge 0$ and $a \ge 1$, we have
    \[
    \omg{c}[\frac{\theta}{a}]{x} \le \omg{c(1+\log a)}[\theta]{x}. 
    \]
    \item For any real numbers $x \ge 0$ and $\theta > 0$, we have
    \[
    \log(1+\frac{x}{\theta}) \ll \omg{1}[\theta]{x}. 
    \]
  \end{enumerate}
\end{lem}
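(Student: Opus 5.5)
The plan is to reduce every assertion to elementary calculus in the two quantities
\[
L = L(x) \coloneq \log(x + e^{e^{2}}), \qquad u = u(x) \coloneq \log L \ge 2 .
\]
With this notation $\omg{c}[\theta]{x} = \exp(c F_{\theta}(x))$, where
\[
F_{\theta}(x) \coloneq \frac{L}{u}\, a_{\theta}(u), \qquad a_{\theta}(u) \coloneq \max\{1, 1 - \log\theta - \log u\} \ge 1 .
\]
Regard $F_{\theta}$ as a function of $u$, or equivalently of $L = e^{u}$, on $u \ge 2$.

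For part 1, monotonicity in $c$ holds because $F_{\theta} \ge 0$. Monotonicity in $\theta$ holds because $a_{\theta}$ is decreasing in $\theta$. For monotonicity in $x$ there are two regimes.
\begin{itemize}
\item Where $a_{\theta} = 1$, the map $e^{u}/u$ is increasing for $u \ge 1$.
\item Where $a = 1 - \log\theta - \log u$, I compute
\[
\frac{d}{du}\frac{e^{u} a}{u} = \frac{e^{u}}{u}\lr(a\lr(1 - \frac{1}{u}) - \frac{1}{u}) \ge \frac{e^{u}}{u}\cdot\frac{a - 1}{2} \ge 0 ,
\]
using $u \ge 2$ and $a \ge 1$.
\end{itemize}
Since $F_{\theta}$ is continuous at the switch point, it is increasing in $x$.

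Part 2 is immediate. If $\theta \ge 1/u$, then $\theta u \ge 1$, so $\log(e/(\theta u)) \le 1$ and the maximum equals $1$. Replacing $\theta$ by $1/u$ also gives maximum $\max\{1, \log e\} = 1$. If $\theta < 1/u$, the minimum is $\theta$ itself.

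For part 4, put $t = \log(e/(\theta u))$ and use $\log a \ge 0$ to get $\max\{1, \log a + t\} \le (1 + \log a)\max\{1, t\}$. Check the two cases $t \ge 1$ and $t < 1$ separately.

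Part 3 is the main point. The key observation is that
\[
x + y + e^{e^{2}} \le (x + e^{e^{2}})(y + e^{e^{2}}) \quad\text{and}\quad xy + e^{e^{2}} \le (x + e^{e^{2}})(y + e^{e^{2}}).
\]
Hence both $L(x + y)$ and $L(xy)$ are at most $L(x) + L(y)$. Now view $F_{\theta}$ as a function $\Phi(L)$ on $L \ge e^{2}$. By part 1, $\Phi$ is increasing. Moreover
\[
\frac{\Phi(L)}{L} = \frac{a_{\theta}(\log L)}{\log L}
\]
is decreasing in $L$, since the numerator is non-increasing and the denominator is increasing. A function with $\Phi(L)/L$ decreasing is subadditive:
\[
\Phi(L_{1} + L_{2}) = L_{1}\frac{\Phi(L_{1} + L_{2})}{L_{1} + L_{2}} + L_{2}\frac{\Phi(L_{1} + L_{2})}{L_{1} + L_{2}} \le \Phi(L_{1}) + \Phi(L_{2}).
\]
Combining this with the monotonicity of $\Phi$ and the bounds on $L(x+y)$ and $L(xy)$ gives both inequalities after exponentiating with the factor $c$.

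For part 5, I will use $\omg{1}[\theta]{x} \ge \exp(h L/u)$ with $h = a_{\theta}(u)$, together with $L/u \ge e^{2}/2 > 1$. I split into two cases.
\begin{itemize}
\item If $\theta \ge 1/u$, then $\log(1 + x/\theta) \le \log(1 + xu) \ll L \ll \exp(L/u)$.
\item If $\theta < 1/u$, then $\log(1 + x/\theta) \ll L + \log(1/\theta)$, and $\log(1/\theta) = h - 1 + u$. Each of $L$, $u$ and $h$ is $\ll \exp(hL/u)$, using $h \ge 1$ and $\exp(L/u) \gg L$.
\end{itemize}

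The only step needing real care is the derivative computation in part 1. It is reused in part 3 to obtain both the monotonicity of $\Phi$ and the decrease of $\Phi(L)/L$, so I would verify it first.
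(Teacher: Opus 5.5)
The paper gives no proof of this lemma (it only says the results ``can be proven easily'' and omits them), so there is no argument to compare yours against. I checked each part, and your proof is correct and supplies what the paper leaves out.

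\begin{itemize}
\item In part 1, the derivative $\frac{e^{u}}{u}\bigl(a(1-\frac{1}{u})-\frac{1}{u}\bigr)\ge\frac{e^{u}}{u}\cdot\frac{a-1}{2}$ on the branch $a=1-\log\theta-\log u\ge 1$ is right for $u\ge 2$. Continuity at $u=1/\theta$ then gives monotonicity in $x$.
\item In part 2, the case split $\theta\ge 1/u$ versus $\theta<1/u$ is correct.
\item In part 3, you reduce both inequalities to $L(x+y),\,L(xy)\le L(x)+L(y)$. You then use that $\Phi$ is monotone and subadditive, the latter because $\Phi(L)/L=a_\theta(\log L)/\log L$ is non-increasing. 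This is valid and handles the additive and multiplicative cases at once.
\item In part 4, the case split $t\ge 1$ versus $t<1$ is correct.
\item In part 5, both cases go through using $L/u\ge e^{2}/2$ and $\exp(L/u)\ge L$.
\end{itemize}

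There is one slip in part 5. From $h=1-\log\theta-\log u$ you get $\log(1/\theta)=h-1+\log u$, not $h-1+u$. Since $\log u\le u$, your upper bound is unaffected.

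Note also that ``increasing/decreasing'' in part 1 has to be read non-strictly. For example, $\omega$ is constant in $\theta$ once $\theta\ge 1/u$. Non-strict monotonicity is exactly what your argument establishes.
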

All the results in Lemma~\ref{lem:omega} can be proven easily, hence we omit the proof. 
\begin{lem}\label{lem:div-max}
  For any integer $n \ge 1$, we have
  \begin{equation}
    \tau(n) \ll \omg{1}{n}. \label{eq:div-max}
  \end{equation}
\end{lem}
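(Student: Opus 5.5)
The statement is the classical Wigert-type divisor bound $\tau(n) \le \exp\bigl(c \log n / \log\log n\bigr)$. Here it only needs to hold with constant $1$ in the exponent, up to an absolute implied constant, and with the shifted argument $n + e^{e^{2}}$ that appears in the definition of $\omg{1}{n}$. I would prove it directly from the multiplicative formula $\tau(n) = \prod_{p^{a} \| n} (a+1)$ by the standard Rankin-type trick.

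The first step is to fix a parameter $\varepsilon \in (0,1]$ and write
\[
\frac{\tau(n)}{n^{\varepsilon}} = \prod_{p^{a} \| n} \frac{a+1}{p^{a\varepsilon}}.
\]
For primes with $p \ge 2^{1/\varepsilon}$ we have $p^{a\varepsilon} \ge 2^{a} \ge a+1$, so each such factor is at most $1$. For the finitely many primes $p < 2^{1/\varepsilon}$, I use the elementary inequality
\[
\frac{a+1}{p^{a\varepsilon}} \le \frac{a+1}{2^{a\varepsilon}} \le \max_{a \ge 0} \frac{a+1}{2^{a\varepsilon}} \le \frac{C}{\varepsilon},
\]
with $C$ absolute, which comes from maximizing $(a+1)2^{-a\varepsilon}$ over real $a$. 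Hence
\[
\tau(n) \le n^{\varepsilon} \Bigl(\frac{C}{\varepsilon}\Bigr)^{2^{1/\varepsilon}}.
\]
Taking logarithms, I then choose $\varepsilon = (1+\eta)\log 2 / \log\log N$ with $N \coloneq n + e^{e^{2}}$, where $\eta$ is chosen so that $2^{1/\varepsilon} = (\log N)^{1/(1+\eta)}$. This gives
\[
\log \tau(n) \le \frac{(1+\eta)\log 2 \,\log N}{\log\log N} + (\log N)^{1/(1+\eta)} \log\frac{C\log\log N}{\log 2}.
\]
The second term is $o(\log N/\log\log N)$. Since $\log 2 < 1$, fixing for instance $\eta = 1/4$ makes the whole right-hand side at most $\log N/\log\log N + O(1)$ uniformly in $n$. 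The shift by $e^{e^{2}}$ guarantees $\log\log N \ge 2$, so $\varepsilon \le 1$ and no small-$n$ degeneracy occurs.

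The only delicate point is checking that the lower-order term really is absorbed into the slack $(1 - (1+\eta)\log 2)\log N/\log\log N$ uniformly in $N \ge e^{e^{2}}$. For large $N$ this is clear, since $(\log N)^{4/5}\log\log N = o(\log N/\log\log N)$. The bounded range of $N$ where this fails contributes only an absolute constant to $\log\tau(n)$, because there $n$ is bounded. The result is $\tau(n) \ll \omg{1}{n}$. Alternatively, one could simply cite the classical Wigert bound $\tau(n) \le 2^{(1+o(1))\log n/\log\log n}$ from \cite{A1} and note that $2^{1+o(1)} < e$.
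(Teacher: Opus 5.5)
Your argument is correct: the Rankin-type bound $\tau(n)\le n^{\varepsilon}(C/\varepsilon)^{2^{1/\varepsilon}}$ with $\varepsilon=\tfrac54\log 2/\log\log N$ gives $\log\tau(n)\le \log N/\log\log N+O(1)$, which is exactly what is needed. The paper's own proof is just a citation of the classical Wigert bound (Tenenbaum, Theorem I.5.4), whose upper-bound half rests on this same Rankin-type argument, so your route is essentially the paper's made self-contained. Your version also makes explicit two points the citation leaves implicit: the leading constant $\log 2$ is below $1$, which leaves room for the secondary term, and passing from $\log n$ to $\log(n+e^{e^{2}})$ is harmless.
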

\begin{proof}
  This lemma follows from \cite[Theorem~I.5.4]{T}. 
\end{proof}
\begin{lem}\label{lem:mv-1323}
  Assume that the Riemann Hypothesis is true. For $\sigma > 1/2$, 
  \begin{equation}
    \abs{\zeta^{-1}(s)} \le \omg{C_{1}}[\sigma - \frac{1}{2}]{\abs{t}}
  \end{equation}
  holds. Here $C_{1}$ is an absolute positive constant. 
\end{lem}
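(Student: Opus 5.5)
I will follow the standard route to conditional bounds for $1/\zeta$ in the critical strip: combine the Borel--Carathéodory and Hadamard three-circles theorems, applied to $\log \zeta(s)$, with the RH-based bound $\log\zeta(s)\ll (\log |t|)^{2-2\sigma}/((1-\sigma)\log\log |t|)$. This is the content of Montgomery--Vaughan, Theorem~13.23 and its proof, to which the label of the lemma refers. The only new work is to track the dependence on $\sigma-\tfrac12$ uniformly, in the form encoded by $\omg{C_{1}}[\sigma-\frac12]{\abs{t}}$.

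\emph{Step 1 (large $\sigma$ and small $|t|$).} For $\sigma\ge 2$ I would use $|\zeta^{-1}(s)|\le\zeta(2)$, which suffices because $\omega\ge 1$. For $|t|\le t_{0}$ with $t_0$ an absolute constant and $\tfrac12<\sigma\le 2$, I would use that under RH the function $1/\zeta$ is holomorphic in $\sigma>1/2$. Near $\sigma=1/2$ I would not use compactness but the same argument as in the main case, with $\log\log$ replaced by a constant. This uniformity is why the $x+e^{e^{2}}$ shift is built into the definition of $\omega$.

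\emph{Step 2 (main range $|t|\ge t_0$, $\tfrac12<\sigma\le 2$).} Write $L=\log\log(|t|+e^{e^2})$. Under RH, Montgomery--Vaughan (Theorem~13.13 and the associated lemmas) gives, uniformly for $\tfrac12+1/L\le\sigma\le 1-\ldots$,
\[
\abs{\log\zeta(s)}\ll \frac{(\log |t|)^{2-2\sigma}}{(1-\sigma)L}+\log L .
\]
Since $(\log|t|)^{2-2\sigma}\le \log|t|$ for $\sigma\ge \tfrac12$, this yields $\abs{\zeta^{-1}(s)}\le\exp\bigl(C\log|t|/L\bigr)=\omg{C}{\abs{t}}$ whenever $\sigma-\tfrac12\ge 1/L$. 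By Lemma~\ref{lem:omega}(2) this is exactly the claimed bound in that regime.

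\emph{Step 3 (the thin strip $0<\sigma-\tfrac12<1/L$).} This is the main obstacle, since there the RH bound on $\log\zeta$ degenerates. I would use the convexity estimate from the Borel--Carathéodory/Hadamard argument, which gives $\Re\log\zeta(s)\ge -C\frac{\log|t|}{L}\log\frac{e}{(\sigma-1/2)L}$. Concretely, I would apply Borel--Carathéodory to $\log\zeta$ on circles centred at $\sigma_1+it$ with $\sigma_1=\tfrac12+1/L$ and radii reaching down to distance $\sigma-\tfrac12$ from the critical line. I would use the upper bound $\Re\log\zeta\le C\log|t|/L$ from Step 2 (valid up to $\sigma=1/2$ via $\zeta(s)\ll\omg{C}{\abs{t}}$ under RH), and control the real-part bound by $\log(R/(R-r))$, with $R-r\asymp\sigma-\tfrac12$. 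Borel--Carathéodory loses a factor $r/(R-r)\approx 1/((\sigma-\frac12)L)$, so I would instead interpolate with Hadamard's three-circles theorem between the radius where Step 2 holds and the near-degenerate radius. The price of this interpolation is only a logarithmic factor $\log\frac{e}{(\sigma-1/2)L}$ in the exponent, which is precisely the $\max\{1,\log\frac{e}{\theta L}\}$ term in $\omega$.

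Finally, I would absorb all constants into a single $C_1$, using the monotonicity in $c$ from Lemma~\ref{lem:omega}(1) and its part (4) when rescaling $\theta$ by constant factors. If the Hadamard step proves delicate, I would fall back on the explicit Montgomery--Vaughan statement, which is of this exact form, and simply verify that its constants are absolute.
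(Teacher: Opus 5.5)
\textbf{The main argument has a gap in Step~3.} That step carries the real content of the lemma, and it does not work as described. Borel--Carath\'eodory and the three-circles theorem, applied to $\log\zeta$ on discs inside $\sigma>\frac12$, use only three inputs: $\log\zeta$ is holomorphic there; $\operatorname{Re}\log\zeta\le A$ with $A=C\log\tau/L$ (here $\tau=|t|+e^{e^2}$, $L=\log\log\tau$); and $|\log\zeta|\le A$ for $\sigma\ge\frac12+\frac1L$.

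Take $f(s)=e^{-\lambda/(s-1/2-it_0)}$ with $\lambda=A/L$. It has all three properties: $|f|\le1$ for $\sigma>\frac12$, and $|\log f(s)|\le\lambda L=A$ for $\sigma\ge\frac12+\frac1L$. Yet $\log\bigl(1/|f(\frac12+\theta+it_0)|\bigr)=A/(\theta L)$.

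So the loss $1/((\sigma-\frac12)L)$ you observe in Borel--Carath\'eodory is sharp for these inputs. No interpolation between them can turn it into $\log\frac{e}{(\sigma-1/2)L}$. Concretely, three circles expresses convexity of $\log M(r)$ in $\log r$. Its outer circle encloses the circle through $s$, so it passes closer to the critical line than $s$ does, exactly where you have no better bound.

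The logarithm reflects the fact that the singularities of $\log\zeta$ on the line are zeros of $\zeta$, so $\log|\zeta|$ has only logarithmic singularities there. The proof has to use this. For small $|t|$ you also defer to ``the same argument''; there plain compactness suffices, since $\zeta(s)\neq0$ for $\sigma\ge\frac12$, $|t|<14$.

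\textbf{How to use the zeros.} Under RH, for $\frac12<u\le2$ and $|t|\ge2$, the Hadamard product gives
\[
\operatorname{Re}\frac{\zeta'}{\zeta}(u+it)=\sum_\gamma\frac{u-1/2}{(u-1/2)^2+(t-\gamma)^2}-\frac12\log\tau+O(1),
\]
with every term of the sum positive. Integrate over $\sigma\le u\le\sigma_1=\frac12+\frac1L$ and put $a=\frac1L$, $b=\sigma-\frac12$. This gives
\[
\log\frac{1}{|\zeta(\sigma+it)|}\le\log\frac{1}{|\zeta(\sigma_1+it)|}+\frac12\sum_\gamma\log\frac{a^2+(t-\gamma)^2}{b^2+(t-\gamma)^2}+O(1).
\]
Two facts finish the thin strip:
\begin{itemize}
\item the elementary inequality $\log\frac{a^2+x^2}{b^2+x^2}\le4\log\frac{ea}{b}\cdot\frac{a^2}{a^2+x^2}$ for $0<b\le a$;
\item the bound $\sum_\gamma\frac{a^2}{a^2+(t-\gamma)^2}=a\bigl(\operatorname{Re}\frac{\zeta'}{\zeta}(\sigma_1+it)+\frac12\log\tau+O(1)\bigr)\ll a\log\tau$, valid under RH.
\end{itemize}
Together with your Step~2 at $\sigma_1$, these yield $\log\frac{1}{|\zeta(s)|}\ll\frac{\log\tau}{L}\log\frac{e}{(\sigma-1/2)L}$, which is the lemma in the thin strip.

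Your fallback, quoting Montgomery--Vaughan, Theorem~13.23 and checking that the constant is absolute, is precisely the paper's whole proof. That version of your proposal is fine. The Borel--Carath\'eodory/three-circles argument you attach to it, however, is not a proof of the near-line part of that theorem.

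A small fix in Step~1: $\zeta(2)>1$, so ``$\omega\ge1$'' is not enough. Use instead that the function in the lemma is at least $e^{C_1e^2/2}$, and enlarge $C_1$.
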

\begin{proof}
  This lemma follows from \cite[Theorem~13.23]{MV}. 
\end{proof}
\begin{lem}\label{lem:mv-1318}
  Assume that the Riemann Hypothesis is true. Then
  \begin{equation}
    \abs{\zeta(s)} \le \omg{C_{2}}{\abs{t}} \lr(1 + \frac{1}{\abs{s}})
  \end{equation}
  holds for any complex number $s$ with $\Re(s) \ge 1/2$ and $s \ne 1$. 
\end{lem}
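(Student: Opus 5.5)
The plan is to derive the bound from a Lindelöf-type estimate on the critical line, transported to the right half-plane by convexity. The cited \cite[Theorem~13.18]{MV} gives, under the Riemann Hypothesis,
\[
\abs{\zeta(\tfrac12+it)} \le \exp\Bigl(C\frac{\log(\abs{t}+4)}{\log\log(\abs{t}+4)}\Bigr)
\]
for $\abs{t}\ge 1$ say, together with the corresponding bound $\abs{\zeta(s)} \ll \exp\bigl(C\log\abs{t}/\log\log\abs{t}\bigr)$ uniformly for $\sigma\ge 1/2$, $\abs{t}\ge 1$. I will split the region $\Re(s)\ge 1/2$ into the part $\abs{t}\ge 1$ and the compact-type part $\abs{t}<1$.

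For $\abs{t}\ge 1$ and $1/2\le\sigma\le 2$, I would invoke the uniform bound from \cite[Theorem~13.18]{MV}. If only the critical-line statement is available, I would apply Phragmén--Lindelöf to $\zeta(s)(s-1)/s$ on the strip $1/2\le\sigma\le 2$, using that $\zeta$ is bounded on $\sigma=2$. For $\sigma\ge 2$ one has trivially $\abs{\zeta(s)}\le\zeta(2)$. Since $\log(x+e^{e^2})/\log\log(x+e^{e^2})$ is increasing and comparable to $\log(x+4)/\log\log(x+4)$ up to an additive constant, choosing $C_2$ large absorbs all constants into $\omg{C_2}{\abs{t}}\ge 1$. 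The factor $1+1/\abs{s}$ is then harmless, being at least $1$.

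For $\abs{t}<1$, I would use the Laurent expansion $\zeta(s)=\frac{1}{s-1}+g(s)$ with $g$ entire, and also the integral representation
\[
\zeta(s)=\frac{s}{s-1}-s\int_1^\infty\{u\}u^{-s-1}\dd{u},
\]
which gives $\abs{\zeta(s)}\le \frac{\abs{s}}{\abs{s-1}}+\frac{\abs{s}}{\sigma}$ for $\sigma>0$. This is the main obstacle: the stated bound does not visibly blow up near $s=1$, yet $\zeta$ has a pole there. The factor $(1+1/\abs{s})$ is bounded near $s=1$, so the lemma cannot hold literally as $s\to1$.

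My first move is therefore to check the intended statement. I expect it should read $(1+1/\abs{s-1})$, or be restricted away from $s=1$, for instance to $\abs{t}\ge 1$ or $\abs{s-1}\ge c$. With $1/\abs{s-1}$ the integral representation gives, for $\sigma\ge1/2$ and $\abs{t}\le1$,
\[
\abs{\zeta(s)}\le 1+\frac{1}{\abs{s-1}}+2\abs{s}.
\]
This is bounded by a constant times $(1+1/\abs{s-1})$ when $\sigma\le 2$; for larger $\sigma$ one uses $\abs{\zeta}\le\zeta(2)$ instead. I would then combine both regions and enlarge $C_2$ so that $\omg{C_2}{0}$ dominates the implied constant. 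In the paper's applications, presumably contour integrals on lines $\sigma=1/(k+1)\cdot(\cdot)$ where the argument of $\zeta$ stays near $1/2$ or away from $1$, only the $\abs{t}$-growth matters. So I would state and prove the version with $\abs{s-1}$, and note that it coincides with the given form whenever $\abs{s-1}\gg1$.
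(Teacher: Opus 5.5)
Your diagnosis is right: for real $s\to1^{+}$ the right-hand side tends to $2\exp(C_{2}e^{2}/2)$ while $\abs{\zeta(s)}\to\infty$, so the lemma is false as stated. The paper's proof, which is only the citation of \cite[Theorem~13.18]{MV}, does not cover the region near $s=1$. Your version with $1+1/\abs{s-1}$ is the correct repair and otherwise follows the paper's route: the same citation for $\abs{t}\ge1$, and $\zeta(s)=s/(s-1)-s\int_{1}^{\infty}\{u\}u^{-s-1}\dd{u}$ for $\abs{t}<1$.

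Every use of the lemma in the paper has $\abs{s-1}\gg1$, so nothing downstream is affected. In the proof of Theorem~\ref{thm:RH} it is applied on the line $\Re(s)=1/2$ or at height $\pm T$. In the proof of Theorem~\ref{thm:sq} it is applied to $\zeta((k+1)s)$ where the real part of $(k+1)s$ lies in $(1/2,9/10]$ or the height is $(k+1)V$. In these ranges the two forms agree up to a constant that can be absorbed into $C_{2}$.
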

\begin{proof}
  This lemma follows from \cite[Theorem~13.18]{MV}. 
\end{proof}
\begin{lem}\label{lem:zeta-left}
  Assume that the Riemann Hypothesis is true. We have
  \[
  \zeta(s) \ll \lr(1 + \abs{t})^{1/2 - \sigma} \omg{C_{2}}{\abs{t}}
  \]
  for $0 \le \sigma \le 1/2$. 
\end{lem}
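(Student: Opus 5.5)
The plan is to combine the functional equation of $\zeta$ with Lemma~\ref{lem:mv-1318} and the Phragm\'en--Lindel\"of convexity principle. On the line $\sigma = 1/2$, Lemma~\ref{lem:mv-1318} gives
\[
\abs{\zeta(1/2 + it)} \le \omg{C_{2}}{\abs{t}} \lr(1 + \frac{1}{\abs{1/2 + it}}) \ll \omg{C_{2}}{\abs{t}}.
\]
On the line $\sigma = 0$, the functional equation $\zeta(s) = \chi(s)\zeta(1 - s)$ applies, with $\abs{\chi(it)} \asymp (1 + \abs{t})^{1/2}$ for $\abs{t} \ge 1$ by Stirling's formula. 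Applying Lemma~\ref{lem:mv-1318} to $\zeta(1 - it)$, which lies on $\Re = 1 \ge 1/2$, we get
\[
\abs{\zeta(it)} \ll (1 + \abs{t})^{1/2} \omg{C_{2}}{\abs{t}}
\]
for $\abs{t} \ge 1$. For $\abs{t} \le 1$ and $0 \le \sigma \le 1/2$, the statement is trivial: $\zeta$ is continuous on this compact set, since it avoids the pole at $s = 1$.

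To interpolate between the two lines, we cannot apply Phragm\'en--Lindel\"of directly to $\zeta(s)(1 + \abs{t})^{\sigma - 1/2}$, because this is not holomorphic. Instead we consider
\[
F(s) = \zeta(s)\,(s + 2)^{s - 1/2} \exp(-C_{2}\, g(s)),
\]
where $g$ is a holomorphic majorant of $\log(\abs{s} + e^{e^{2}})/\log\log(\abs{s} + e^{e^{2}})$. An awkward point is that $\omg{C_{2}}{\abs{t}}$ is not the modulus of a holomorphic function.

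The simplest route around this, and the one I would take, is to avoid the holomorphic weight altogether. Fix $T \ge 2$ and apply the three-lines (Hadamard) theorem to $\zeta(s)(s + 2)^{s - 1/2}$ on the rectangle $0 \le \sigma \le 1/2$, $\abs{t - T} \le T/2$. The truncation at height $T/2$ is harmless, since $\zeta$ has polynomial growth in the strip and a standard Gaussian damping factor $e^{(s - iT)^{2}/T^{2}}$ handles the horizontal sides. On this rectangle:
\begin{itemize}
\item $\abs{(s + 2)^{s - 1/2}} \asymp (1 + \abs{t})^{\sigma - 1/2}$, up to the bounded factor $e^{-t \arg(s + 2)}$. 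That factor cancels between the two boundary lines only up to $e^{O(1)}$, because $\arg(s + 2) = \pi/2 + O(1/t)$.
\item By monotonicity of $\omega$ (Lemma~\ref{lem:omega}(1)) together with Lemma~\ref{lem:omega}(3), the boundary bounds are uniformly $\ll \omg{C_{2}}{2T} \ll \omg{C_{2}}{T}$.
\end{itemize}
The maximum principle then yields
\[
\abs{\zeta(\sigma + iT)}(1 + T)^{\sigma - 1/2} \ll \omg{C_{2}}{T},
\]
which is the claim, after enlarging $C_{2}$ if needed (absorbed into the implied constant via Lemma~\ref{lem:omega}(3) applied with $x = T$, $y = 2$). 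Negative $t$ follow by conjugation.

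The main obstacle I expect is the bookkeeping of the Phragm\'en--Lindel\"of step with the damping factor. The damping must not destroy the local bound, so I would use the Gaussian factor, which is $\asymp 1$ on the central portion $\abs{t - T} \le \sqrt{T}$ and decays on the horizontal sides. Everything else reduces to Stirling's formula for $\chi$ and Lemma~\ref{lem:mv-1318}.
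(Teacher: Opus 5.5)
The overall plan (functional equation on $\sigma=0$, Lemma~\ref{lem:mv-1318} on $\sigma=1/2$, then interpolate) can be made to work, but the interpolation step fails as you wrote it. With the principal branch, $\abs{(s+2)^{s-1/2}}=\abs{s+2}^{\sigma-1/2}e^{-t\arg(s+2)}$. Since $\arg(s+2)=\pi/2+O(1/t)$, this gives $e^{-t\arg(s+2)}=e^{-\pi t/2+O(1)}$. That factor is not bounded, and across your rectangle $T/2\le t\le 3T/2$ it varies by a factor $e^{\pi T/2}$. The maximum principle (or the three-lines theorem) compares the value at $\sigma+iT$ with the supremum over the whole boundary, not with the two sides at the same height. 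That supremum sits near $t=T/2$, so the best you can extract is $\zeta(\sigma+iT)\ll T^{1/2-\sigma}\omg{C_{2}}{2T}$ times an extra factor of about $e^{\pi T/4}$.

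The Gaussian $e^{(s-iT)^{2}/T^{2}}$ does not rescue this. Its modulus lies between $e^{-1/4}$ and $e^{1/(4T^{2})}$ on the rectangle, so it also fails to suppress the horizontal sides, where you only know polynomial growth of $\zeta$. A width-$\sqrt{T}$ Gaussian $e^{(s-iT)^{2}/T}$ would handle the horizontal sides, which is presumably what your last remark intends. The standard repair is to use $(-i(s+2))^{s-1/2}$. Then $\arg(-i(s+2))=-\arctan((\sigma+2)/t)$, so $t\arg(-i(s+2))=O(1)$ and the weight really is $\asymp(1+\abs{t})^{\sigma-1/2}$. Both vertical sides then give $\ll\omg{C_{2}}{2T}$, and your argument closes.

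More to the point, no interpolation is needed; this is how the paper argues, via Corollary~10.5 and Theorem~13.18 of Montgomery--Vaughan. Stirling's formula gives $\abs{\chi(s)}\asymp(1+\abs{t})^{1/2-\sigma}$ uniformly for $\sigma$ in any bounded interval and $\abs{t}\ge 1$, not only on $\sigma=0$. So for every $s$ with $0\le\sigma\le 1/2$ and $\abs{t}\ge1$ you may write $\zeta(s)=\chi(s)\zeta(1-s)$. The point $1-s$ has real part $1-\sigma\in[1/2,1]$ and $\abs{1-s}\ge 1$, so Lemma~\ref{lem:mv-1318} directly gives $\abs{\zeta(1-s)}\le 2\omg{C_{2}}{\abs{t}}$. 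Together with your compactness argument for $\abs{t}\le 1$, that is the whole proof. The difficulty that $\omega$ is not the modulus of a holomorphic function never arises.
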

\begin{proof}
  This lemma follows from \cite[Corollary~10.5, Theorem~13.18]{MV}. 
\end{proof}
\begin{lem}\label{lem:gy}
  Assume that the Riemann Hypothesis is true. For any real number $Y \ge 1$ and any integer $q \ge 1$, we have
  \begin{align*}
    g_{Y, q}(s) &\coloneq \zeta^{-2}(s) \prod_{p \mid q} \lr(1 - p^{-s})^{-2} - \sum_{\substack{n \le Y \\ (n, q) = 1}} \frac{\mmu(n)}{n^{s}}\\
    &\ll 2^{\sigma} \kappa(q) Y^{1/2 - \sigma} \omg{C_{3}}[\sigma - \frac{1}{2}]{Y^{\sigma}} \omg{C_{3}}[\sigma - \frac{1}{2}]{\abs{t}}
  \end{align*}
  for $\sigma > 1/2$. Here
  \[
  \kappa(q) \coloneq \prod_{p \mid q} \lr(1 - p^{-1/2})^{-2}
  \]
  and $C_{3}$ is an absolute positive constant. 
\end{lem}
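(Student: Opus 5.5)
Writing $h_q(s)\coloneq\zeta^{-2}(s)\prod_{p\mid q}(1-p^{-s})^{-2}$, which is the Dirichlet series $\sum_{(n,q)=1}\mmu(n)n^{-s}$, we have $g_{Y,q}(s)$ equal to the tail $\sum_{n>Y,(n,q)=1}\mmu(n)n^{-s}$ for $\sigma>1$. The plan is to represent this tail by a Perron-type contour integral and shift the contour towards the line $\Re(w)=1/2$, using the RH bounds of Lemma~\ref{lem:mv-1323}. Concretely, for $\sigma>1/2$ I would use
\[
\sum_{\substack{n\le Y\\(n,q)=1}}\frac{\mmu(n)}{n^{s}}=\frac{1}{2\pi i}\int_{c-iU}^{c+iU} h_q(s+w)\frac{Y^{w}}{w}\dd{w}+\text{(error)},
\]
with $c=1-\sigma+1/\log Y$ (or $c$ small positive if $\sigma>1$) and a truncation height $U$ to be chosen. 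The error is controlled by Lemma~\ref{lem:mv-perron}, using $\abs{\mmu(n)}=\tau(n)\ll\omg{1}{n}$ from Lemma~\ref{lem:div-max}.

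Next I move the line of integration left to $\Re(w)=\tfrac12-\sigma+\eta$ with $\eta>0$ small, say $\eta=(\sigma-1/2)/2$, or better $\eta$ of size $1/\log\log(Y+|t|)$ balanced against the $\theta$-parameter in $\omega$. The only pole crossed is $w=0$, with residue $h_q(s)$, and this produces exactly $g_{Y,q}(s)$ with a minus sign. On the new line, Lemma~\ref{lem:mv-1323} gives $\abs{\zeta^{-1}(s+w)}\le\omg{C_1}[\eta]{\abs{t+v}}$. The factor $\prod_{p\mid q}\abs{1-p^{-(s+w)}}^{-2}$ is at most $\prod_{p\mid q}(1-p^{-1/2})^{-2}=\kappa(q)$, since $\Re(s+w)>1/2$. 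The factor $Y^{w}$ contributes $Y^{1/2-\sigma+\eta}$. The integral of $1/\abs{w}$ over $\abs{v}\le U$ contributes a logarithm, which Lemma~\ref{lem:omega}(5) absorbs into an $\omega$-factor.

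The pieces are then combined with Lemma~\ref{lem:omega}. Part (3) splits $\omg{}{\abs{t+v}}\le\omg{}{\abs t}\omg{}{\abs v}$. Part (4) converts the parameter $\eta=(\sigma-\tfrac12)/2$ into $\sigma-\tfrac12$ at the cost of enlarging the constant. The factor $Y^{\eta}$ is absorbed into $\omg{C_3}[\sigma-\frac12]{Y^{\sigma}}$ by choosing $\eta\asymp 1/\log\log$ when $\sigma-1/2$ is large, and by part (2) otherwise. The factor $2^{\sigma}$ is slack coming from $\sum 1/n^{c}$-type or $Y^{c}$ terms when $\sigma$ is large. The horizontal segments are bounded in the same way, with an extra factor $1/U$.

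The main obstacle is the truncation error together with the choice of $U$, because $\mmu$ is unbounded and $\sigma$ may be close to $1/2$. I would take $U=Y^{\sigma}$ or larger, which gives Perron error $\ll Y^{1-\sigma}\omg{1}{Y}\log Y/U$. It then has to be checked that this is dominated by $Y^{1/2-\sigma}\omg{}{Y^\sigma}$, which is why the bound carries $\omega(Y^{\sigma})$ rather than $\omega(Y)$. If that fails I would take $U$ larger, such as $Y^{2}$, at the cost of $\omg{}{U}$ on the integral, which is still of the form $\omg{}{Y^\sigma}$ up to constants.
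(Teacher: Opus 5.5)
Your proposal is correct and follows essentially the same route as the paper: Perron's formula (Lemma~\ref{lem:mv-perron}) truncated at height $Y^{\sigma}$, a shift to the line $\Re(s+w)=\frac12+\eta$ with $\eta$ the smaller of $1/\log\log(\abs{t}+Y^{\sigma}+e^{e^{2}})$ and $\frac12(\sigma-\frac12)$, picking up $\zeta^{-2}(s)\prod_{p\mid q}(1-p^{-s})^{-2}$ as the residue at $w=0$, and Lemma~\ref{lem:mv-1323} with Lemma~\ref{lem:omega} on the remaining segments. The differences are cosmetic: the paper starts from the line $\Re(w)=\frac12+1/\log(1+Y)$, and the factor $2^{\sigma}$ actually comes from the truncation error, via $n^{-\sigma}\le (Y/2)^{-\sigma}$ for $Y/2<n<2Y$; your stated truncation estimate omits this factor, which is harmless because $2^{\sigma}$ is already in the target bound.
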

\begin{proof}
  By Lemma~\ref{lem:mv-perron} and Lemma~\ref{lem:div-max}, we see that
  \[
  \sum_{\substack{n \le Y \\ (n, q) = 1}} \frac{\mmu(n)}{n^{s}} = \frac{1}{2 \pi i} \int^{\alpha + i T}_{\alpha - i T} \zeta^{-2}(s + w)\prod_{p \mid q} \lr(1 - p^{-(s + w)})^{-2} \frac{Y^{w}}{w} \dd{w} + R
  \]
  where
  \begin{align*}
    \alpha &= \frac{1}{2} + \frac{1}{\log(1 + Y)}\\
    T &= Y^{\sigma}
  \end{align*}
  and
  \begin{align*}
    R &\ll 2^{\sigma} Y^{-\sigma} \omg{1}{2 Y} \lr(1 + \frac{Y \log(1 + Y)}{T}) + \frac{Y^{1/2} \lr(\log(1 + Y))^{2}}{T}\\
    &\ll 2^{\sigma} Y^{1/2 - \sigma} \omg{3}{Y^{\sigma}}. 
  \end{align*}
  We rewrite the above integral as
  \begin{align*}
    &\frac{1}{2 \pi i} \int^{\alpha + i T}_{\alpha - i T} \zeta^{-2}(s + w)\prod_{p \mid q} \lr(1 - p^{-(s + w)})^{-2} \frac{Y^{w}}{w} \dd{w}\\
    &\quad = \frac{1}{2 \pi i} \left(\int^{\alpha + i T}_{\beta + i T} + \int^{\beta + i T}_{\beta - i T} + \int^{\beta - i T}_{\alpha - i T}\right) \zeta^{-2}(s + w)\prod_{p \mid q} \lr(1 - p^{-(s + w)})^{-2} \frac{Y^{w}}{w} \dd{w}\\
    &\quad \phantom{{}={}} \quad + \zeta^{-2}(s) \prod_{p \mid q} \lr(1 - p^{-s})^{-2}\\
    &\quad \eqcolon I_{1} + I_{2} + I_{3} + \zeta^{-2}(s) \prod_{p \mid q} \lr(1 - p^{-s})^{-2}
  \end{align*}
  where
  \[
  \beta = \frac{1}{2} - \sigma + \min\{\frac{1}{\displaystyle \log\log(\abs{t} + T + e^{e^{2}})}, \frac{1}{2} \lr(\sigma - \frac{1}{2})\}. 
  \]
  By Lemma~\ref{lem:mv-1323}, we have
  \begin{align*}
    I_{1} &\ll \omg{2 C_{1}}[\sigma + \beta - \frac{1}{2}]{\abs{t + T}} \kappa(q) \frac{Y^{\alpha}}{T}\\
    &\ll \kappa(q) Y^{1/2 - \sigma} \omg{4C}[\sigma - \frac{1}{2}]{\abs{t} + Y^{\sigma}}, \\
    I_{2} &\ll \omg{2 C_{1}}[\sigma + \beta - \frac{1}{2}]{\abs{t} + T} \kappa(q) Y^{\beta} \int^{T}_{-T}\frac{\dd{v}}{\abs{\beta} + \abs{v}}\\
    &\ll \kappa(q) \omg{4 C_{1}}[\sigma - \frac{1}{2}]{\abs{t} + Y^{\sigma}}\\
    &\phantom{{}\ll {}}\quad \times Y^{1/2 - \sigma} \exp(\frac{\log Y}{\displaystyle \log\log(\abs{t} + T + e^{e^{2}})}) \log(1 + \frac{T}{\abs{b}})\\
    &\ll \kappa(q) Y^{1/2 - \sigma} \omg{4 C_{1} + 4}[\sigma - \frac{1}{2}]{\abs{t} + Y^{\sigma}}
    \intertext{and}
    I_{3} &\ll \omg{4 C_{1}}[\sigma + \beta - \frac{1}{2}]{\abs{t - T}} \kappa(q) \frac{Y^{\alpha}}{T}\\
    &\ll \kappa(q) Y^{1/2 - \sigma} \omg{4 C_{1}}[\sigma - \frac{1}{2}]{\abs{t} + Y^{\sigma}}. 
  \end{align*}
  Combining the above estimates completes the proof. 
\end{proof}
\section{Proof of the Theorems}
\subsection{Proof of Theorem~\ref{thm:uncond}}
We begin by estimating the sum $\sum_{n \le x, (n, q) = 1} f_{k}(n)$. 
Let $z = x^{1/k}$ and $0 < \rho \le 1$ and split the sum as follows: 
\begin{align*}
  \sum_{\substack{n \le x \\ (n, q) = 1}} f_{k}(n) &= \sum_{\substack{d^{k} l \le x \\ (d, q) = (l, q) = 1}} \mmu(d)\\
  &= \lr(\sum_{\substack{d^{k} l \le x \\ d \le \rho z \\ (d, q) = (l, q) = 1}} + \sum_{\substack{d^{k} l \le x \\ l \le \rho^{-k} \\ (d, q) = (l, q) = 1}} - \sum_{\substack{d \le \rho z \\ l \le \rho^{-k} \\ (d, q) = (l, q) = 1}}) \mmu(d)\\
  &\eqcolon S_{1} + S_{2} - S_{3}. 
\end{align*}
By Lemma~\ref{lem:cohen}, Lemma~\ref{lem:mmuqk} and Lemma~\ref{lem:div-avg}, we see that
\begin{equation}
  \begin{split}
    S_{1} &= \sum_{\substack{d \le \rho z \\ (d, q) = 1}} \mmu(d) \sum_{\substack{l \le x/d^{k} \\ (l, q) = 1}} 1\\
    &= \frac{\varphi(q)}{q} x \sum_{\substack{d \le \rho z \\ (d, q) = 1}} \frac{\mmu(d)}{d^{k}} + \O(\theta(q) \sum_{\substack{d \le \rho z \\ (d, q) = 1}} \abs{\mmu(d)})\\
    &= \frac{\varphi(n) n^{2 k - 1}}{\zeta^{2}(k) \J^{2}{k}{n}} x + \O(\theta(q) \rho^{1 - k} z \dlt(\rho z)) +\O(\theta(q) \rho z \log(1 + \rho z)). 
  \end{split}\label{eq:s1}
\end{equation}
Applying Lemma~\ref{lem:mmuq} and Lemma~\ref{lem:zetapoly}, we obtain
\begin{equation}
  \begin{split}
    S_{2} &= \sum_{\substack{l \le \rho^{-k} \\ (l, q) = 1}} \sum_{\substack{d \le (x/l)^{1/k} \\ (d, q) = 1}} \mmu(d)\\
    &\ll \theta(q) x^{1/k} \sum_{\substack{l \le \rho^{-k} \\ (l, q) = 1}} l^{-1/k} \dlt(\lr(\frac{x}{l})^{1/k})\\
    &\ll \theta(q) \rho^{1 - k} z \dlt(\rho z). 
  \end{split}\label{eq:s2}
\end{equation}
Lemma~\ref{lem:mmuq} also gives
\begin{equation}
  S_{3} = \sum_{\substack{d \le \rho z \\ (d, q) = 1}} \mmu(d) \sum_{\substack{l \le \rho^{-k} \\ (l, q) = 1}} 1 \ll \theta(q) \rho^{1 - k} z \dlt(\rho z). \label{eq:s3}
\end{equation}
Putting $\rho = \dlt^{1/k}(z \dlt^{1/k}(z))$, we have
\begin{equation}
  \begin{split}
    \rho^{1 - k} z \dlt(\rho z) &= \rho z \dlt^{-1}(z \dlt^{1/k}(z)) \dlt(z \dlt^{1/k}(z \dlt^{1/k}(z)))\\
    &\le \rho z \dlt^{-1}(z \dlt^{1/k}(z)) \dlt(z \dlt^{1/k}(z))\\
    &= \rho z\\
    &= z \dlt^{1/k}(z \dlt^{1/k}(z))\\
    &\le z \dlt^{1/k}(z^{1/2})\\
    &\le z \dlt^{1/(2k)}(z). 
  \end{split}\label{eq:rhoz}
\end{equation}
Combining \eqref{eq:s1}, \eqref{eq:s2}, \eqref{eq:s3} and \eqref{eq:rhoz}, we find that
\[
\sum_{\substack{n \le x \\ (n, q) = 1}} f_{k}(n) = \frac{\varphi(q) q^{2 k - 1}}{\zeta^{2}(k) \J^{2}{k}{n}} x + \O(\theta(q) x^{1/k}\dlt^{1/(2k)}(x^{1/k})\log(1+x^{1/k})). 
\]
Therefore, applying Lemma~\ref{lem:decomp1}, Lemma~\ref{lem:ck2} and Lemma~\ref{lem:ck3}, we obtain
\begin{align*}
  \sum_{\substack{n \le x \\ (n, q) = 1}} \mu_{k}(n) &= \sum_{\substack{d \le x \\ (d, n) = 1}} c_{k}(d) \sum_{\substack{l \le x/d \\ (l, n) = 1}} f_{k}(l)\\
  &= \frac{\varphi(q) q^{2 k - 1}}{\zeta^{2}(k) \J^{2}{k}{n}} x \sum_{\substack{d \le x \\ (d, q) = 1}} \frac{c_{k}(d)}{d}\\
  &\phantom{{}={}} + O\left(\theta(q) x^{1/k} \dlt^{1/(2k)}(x^{1/k}) \log(1 + x^{1/k}) \vphantom{\sum_{\substack{d \le x \\ (d, q) = 1}} \frac{\abs{c_{k}(d)}}{d^{1/k} \dlt^{1/(2k)}(d^{1/k})}}\right.\\
  &\phantom{\phantom{{}={}} + O\left(\vphantom{\sum_{\substack{d \le x \\ (d, q) = 1}} \frac{\abs{c_{k}(d)}}{d^{1/k} \dlt^{1/(2k)}(d^{1/k})}}\right.} \quad \left. {}\times \sum_{\substack{d \le x \\ (d, q) = 1}} \frac{\abs{c_{k}(d)}}{d^{1/k} \dlt^{1/(2k)}(d^{1/k})}\right)\\
  &= A_{k, q} x + \O(\theta(q) x^{1/k} \dlt^{1/(2k)}(x^{1/k}) \lr(\log(1 + x^{1/k}))^{2})\\
  &= A_{k, q} x + \O_{k}(\theta(q) x^{1/k} \dlt^{1/(3 k)}(x^{1/k})). 
\end{align*}
Finally, if $x \ge 3$, we see that
\begin{align*}
  \dlt^{1/(3 k)}(x^{1/k})&= \exp(-\frac{D_{1}}{3 k} \frac{\lr(\log(x^{1/k} + e^{e^{1/3}}))^{3/5}}{\displaystyle \lr(\log\log(x^{1/k} + e^{e^{1/3}}))^{1/5}})\\
  &\le \exp(-\frac{D_{1}}{3 k} \frac{\lr(k^{-1} \log(x + e^{e^{1/3}}))^{3/5}}{\displaystyle \lr(\log\log(x + e^{e^{1/3}}))^{1/5}})\\
  &\le \exp(-\frac{D_{1}}{4} k^{-8/5} \frac{\lr(\log x)^{3/5}}{\lr(\log\log x)^{1/5}}), 
\end{align*}
and we are done. 
\subsection{Proof of Theorem~\ref{thm:RH}}
We first split the sum $\sum_{n \le x, (n, q) = 1} f_{k}(n)$ as follows: 
\begin{align*}
  \sum_{\substack{n \le x \\ (n, q) = 1}} f_{k}(n) &= \sum_{\substack{d^{k} l \le x \\ (d, q) = (l, q) = 1}} \mmu(d)\\
  &= \lr(\sum_{\substack{d \le Y \\ (d, q) = 1}} + \sum_{\substack{Y < d \le x^{1/k} \\ (d, q) = 1}}) \mmu(d) \sum_{\substack{l \le x/d^{k} \\ (l, q) = 1}} 1\\
  &\eqcolon S_{1} + S_{2}
\end{align*}
where $Y \ge 1$. 
By Lemma~\ref{lem:cohen} and Lemma~\ref{lem:div-avg}, we see that
\begin{align*}
  S_{1} &= \sum_{\substack{d \le Y \\ (d, q) = 1}} \mmu(d) \sum_{\substack{l \le x/d^{k} \\ (l, q) = 1}} 1\\
  &= \sum_{\substack{d \le Y \\ (d, q) = 1}} \mmu(d) \lr(\frac{\varphi(q)}{q} \frac{x}{d^{k}} + \O(\theta(q)))\\
  &= \frac{\varphi(q)}{q} x \sum_{\substack{d \le Y \\ (d, q) = 1}} \frac{\mmu(d)}{d^{k}} + \O(\theta(q) Y \lr(1 + \log Y)). 
\end{align*}
Since
\[
\abs{\sum_{\substack{d^{k} \mid n \\ d > Y}} \mmu(d)} \le \sum_{d^{k} \mid n} \tau(d) \le \sum_{d \mid n} \tau(d) \le \sum_{d \mid n} \tau(n) = \tau^{2}(n), 
\]
by Lemma~\ref{lem:mv-perron} and Lemma~\ref{lem:div-max}, we obtain
\begin{equation}
  \begin{split}
    S_{2} &= \sum_{\substack{Y < d \le x^{1/k} \\ (d, q) = 1}} \mmu(d) \sum_{\substack{l \le x/d^{k} \\ (l, q) = 1}} 1 \\
    &= \frac{1}{2 \pi i} \int^{\alpha + i T}_{\alpha - i T} \zeta(s) \prod_{p \mid q} \lr(1 - p^{-s}) g_{Y, q}(ks) \frac{x^{s}}{s} \dd{s} + R
  \end{split} \label{eq:s2-RH}
\end{equation}
where
\begin{align*}
  \alpha &= 1 + \frac{1}{\log(1 + x)}, \\
  T &= x
\end{align*}
and
\begin{align*}
  R &\ll \omg{2}{2 x} \lr(1 + \frac{x \log(1 + x)}{T}) + \frac{x \lr(\log(1 + x))^{3}}{T}\\
  &\ll \omg{3}{x}. 
\end{align*}
We move the line segment $[\alpha - i T, \alpha + i T]$ which is the contour for the first term on the right hand side of \eqref{eq:s2-RH} to the contour consisting of the line segments $[\alpha - i T, 1/2 - i T]$, $[1/2 - i T, 1/2 + i T]$ and $[1/2 + i T, \alpha +i T]$. 
Then we have
\begin{align*}
  &\frac{1}{2 \pi i} \int^{\alpha + i T}_{\alpha - i T} \zeta(s) \prod_{p \mid q} \lr(1 - p^{-s}) g_{Y, q}(ks) \frac{x^{s}}{s} \dd{s}\\
  &\quad = \frac{\varphi(q)}{q} x \lr(\frac{q^{2 k}}{\zeta^{2}(k) \J^{2}{k}{q}} - \sum_{\substack{n \le Y \\ (n, q) = 1}} \frac{\mmu(n)}{n^{k}}) + I_{1} + I_{2} + I_{3}
\end{align*}
where $I_{1}$ and $I_{3}$ denote the integrals over the horizontal line segments and $I_{2}$ is the integral over the vertical line segment. 
Applying Lemma~\ref{lem:mv-1318} and Lemma~\ref{lem:gy}, we have
\begin{align*}
  I_{1}, I_{3} &\ll \omg{C_{2}}{T} \eta(q) (2 e)^{k} \kappa(q) Y^{(1 - k)/2} \omg{3 C_{3}}{Y^{k}} \omg{C_{3}}{k} \omg{C_{3}}{T}\\
  &\ll \theta(q) \exp(\lr(2 + C_{3}) k) Y^{(1 - k)/2} \omg{3 C_{3}}{Y^{k}} \omg{C_{2} + C_{3}}{x}
\end{align*}
and
\begin{align*}
  I_{2} &\ll \omg{C_{2}}{T} \eta(q) 2^{k} \kappa(q) Y^{(1 - k)/2} \omg{C_{3}}{Y^{k}} \omg{C_{3}}{k} \omg{C_{3}}{T} x^{1/2} \log(1 + T)\\
  &\ll \theta(q) \exp(\lr(1 + C_{3}) k) x^{1/2} \omg{C_{2} + C_{3} + 1}{x} Y^{(1 - k)/2} \omg{C_{3}}{Y^{k}}
\end{align*}
where
\[
\eta(q) \coloneq \prod_{p \mid q} \lr(1 + p^{-1/2}). 
\]
Putting $Y = \exp(2 C_{3} + 4) x^{1/(k + 1)}$, we obtain
\[
\sum_{\substack{n \le x \\ (n, q) = 1}} f_{k}(n) = \frac{\varphi(q) q^{2 k - 1}}{\zeta^{2}(k) \J^{2}{k}{q}} x + \O(\theta(q) x^{1/(k + 1)} \omg{C_{2} + 4 C_{3} + 3}{x}). 
\]
Here we note that $S_{2} = 0$ if $Y \ge x^{1/k}$. 
Finally, by Lemma~\ref{lem:decomp1}, Lemma~\ref{lem:ck2} and Lemma~\ref{lem:ck3}, we obtain
\begin{align*}
  \sum_{\substack{n \le x \\ (n, q) = 1}} \mu_{k}(n) &= \sum_{\substack{d \le x \\ (d, q) = 1}} c_{k}(d) \sum_{\substack{l \le x/d \\ (l, q) = 1}} f_{k}(l)\\
  &= \frac{\varphi(q) q^{2 k - 1}}{\zeta^{2}(k) \J^{2}{k}{q}} x \sum_{\substack{d \le x \\ (d, q) = 1}} \frac{c_{k}(d)}{d}\\
  &\phantom{{}={}} \quad + \O(\theta(q) x^{1/(k + 1)} \omg{C_{2} + 4 C_{3} + 3}{x} \sum_{d \le x} \frac{\abs{c_{k}(d)}}{d^{1/(k + 1)}})\\
  &= A_{k, q} x + \O(\theta(q) x^{1/(k + 1)} \omg{C_{2} + 4 C_{3} + 4}{x}). 
\end{align*}
Since
\[
\frac{\log x}{\log\log x} \ll \frac{\log(x + e^{e^{2}})}{\displaystyle \log\log(x + e^{e^{2}})}
\]
for $x \ge 3$, this completes the proof. 
\subsection{Proof of Theorem~\ref{thm:sq}}
By Lemma~\ref{lem:mv-perron} and Lemma~\ref{lem:decomp3}, we see that
\begin{equation}
  \begin{split}
    \sideset{}{^\prime}{\sum}_{\substack{n \le x \\ (n, q) = 1}} \mu_{k}(n) &= \frac{1}{2 \pi i} \int^{\alpha + i V}_{\alpha - i V} \frac{\zeta(s) \zeta((k + 1) s)}{\displaystyle \zeta^{2}(k s) \zeta(2 k s) \zeta((2 k + 2) s)}\\
    &\hphantom{{} = \frac{1}{2 \pi i}\int} \times \prod_{p \mid q} \frac{\lr(1 - p^{-s}) \lr(1 - p^{-(k + 1) s})}{\displaystyle \lr(1 - p^{-k s})^{2} \lr(1 - p^{-2 k s}) \lr(1 - p^{-(2 k + 2) s})}\\
    &\hphantom{{} = \frac{1}{2 \pi i}\int} \times \lr(\sum_{\substack{n = 1 \\ (n, q) = 1}}^{\infty} \frac{1}{n^{(2 k + 1) s}})^{2} D_{k, q}(s) \frac{x^{s}}{s} \dd{s}\\
    &\hphantom{{} = {}} \quad + \O(\frac{x \log(1 + x)}{V \norm{x}^{\prime}})
  \end{split} \label{eq:s-sq}
\end{equation}
where
\begin{align*}
  \alpha &= 1 + \frac{1}{\log(1 + x)}, \\
  V &\ge 1, \\
  D_{k, q}(s) &\coloneq \sum_{\substack{n = 1 \\ (n, q) = 1}}^{\infty} \frac{d_{k}(n)}{n^{s}}\\
    &= \prod_{p \nmid q} \frac{\lr(1 - 2 p^{-k s} + p^{-(k + 1)s}) \lr(1 - p^{-(k + 1) s}) \lr(1 - p^{-(2 k + 1) s})^{2}}{\lr(1 - p^{-k s})^{2} \lr(1 - p^{-2 k s}) \lr(1 - p^{-(2 k + 2) s})}
\end{align*}
for $\Re(s) > 1/(3 k)$, 
\[
\norm{x}^{\prime} \coloneq \min_{\substack{n \in \ZZ \\ n \ne x}}\abs{x - n}
\]
and
\[
\sideset{}{^\prime}{\sum}_{\substack{n \le x \\ (n, q) = 1}} \coloneq \frac{1}{2} \lr(\sum_{\substack{n \le x \\ (n, q) = 1}} + \sum_{\substack{n < x \\ (n, q) = 1}}). 
\]
We move the line segment $[\alpha - i V, \alpha + i V]$ which is the contour for the first term on the right hand side of \eqref{eq:s-sq} to the contour consisting of the line segments $[\alpha - i V, \beta - i V]$, $[\beta - i V, \beta + i V]$ and $[\beta + i V, \alpha + i V]$ with $1/(2 k) < \beta \le 3/(5 k)$. 
We denote the integrals over the horizontal line segments by $I_{1}$ and $I_{3}$, and the integral over the vertical line segment by $I_{2}$. 
Then we have
\begin{align*}
  &\frac{1}{2 \pi i} \int^{\alpha + i V}_{\alpha - i V} \frac{\zeta(s) \zeta((k + 1) s)}{\displaystyle \zeta^{2}(k s) \zeta(2 k s) \zeta((2 k + 2) s)} \\
  &\hphantom{\frac{1}{2 \pi i} \int^{\alpha + i V}_{\alpha - i V}} \quad \times \prod_{p \mid q} \frac{\lr(1 - p^{-s}) \lr(1 - p^{-(k + 1) s})}{\displaystyle \lr(1 - p^{-k s})^{2} \lr(1 - p^{-2 k s}) \lr(1 - p^{-(2 k + 2) s})}\\
  &\hphantom{\frac{1}{2 \pi i} \int^{\alpha + i V}_{\alpha - i V}} \quad \times \lr(\sum_{\substack{n = 1 \\ (n, q) = 1}}^{\infty} \frac{1}{n^{(2 k + 1) s}})^{2} D_{k, q}(s) \frac{x^{s}}{s} \dd{s}\\
  &\quad = A_{k, q} x + B_{k, q} x^{1/(k + 1)} + I_{1} + I_{2} + I_{3}. 
\end{align*}
Applying Lemma~\ref{lem:mv-1318}, Lemma~\ref{lem:zeta-left}, Lemma~\ref{lem:mv-1323} and Lemma~\ref{lem:decomp3}, we get
\begin{align*}
  I_{1}, I_{3} &\ll V^{1/2} \omg{C_{2}}{V} \omg{C_{2}}{k V} \omg{2 C_{1}}[k \beta - \frac{1}{2}]{k V} \omg{2 C_{1}}{k V} C_{k, q} k^{2} \frac{x}{V}\\
  &\longrightarrow 0
\end{align*}
as $V \to +\infty$, where
\[
C_{k, q} \coloneq \prod_{p \mid q} \frac{\lr(1 + p^{-1/(2 k)}) \lr(1 + p^{-(k + 1)/(2 k)})}{\displaystyle \lr(1 - p^{-1/2})^{2} \lr(1 - p^{-1}) \lr(1 - p^{-(k + 1)/k})}. 
\]
Hence, we obtain
\begin{align*}
   \widehat{\Xi}_{k, q}(x) &= \frac{1}{2 \pi i} \int^{\beta + i \infty}_{\beta - i \infty} \frac{\zeta(s) \zeta((k + 1) s)}{\displaystyle \zeta^{2}(k s) \zeta(2 k s) \zeta((2 k + 2) s)} \\
  &\hphantom{{} = \frac{1}{2 \pi i} \int^{\beta + i \infty}_{\beta - i \infty}} \quad \times \prod_{p \mid q} \frac{\lr(1 - p^{-s}) \lr(1 - p^{-(k + 1) s})}{\displaystyle \lr(1 - p^{-k s})^{2} \lr(1 - p^{-2 k s}) \lr(1 - p^{-(2 k + 2) s})} \\
  &\hphantom{{} = \frac{1}{2 \pi i} \int^{\beta + i \infty}_{\beta - i \infty}} \quad \times \lr(\sum_{\substack{n = 1 \\ (n, q) = 1}}^{\infty} \frac{1}{n^{(2 k + 1) s}})^{2} D_{k, q}(s) \frac{x^{s}}{s} \dd{s}
\end{align*}
where
\[
\widehat{\Xi}_{k, q}(x) \coloneq \sideset{}{^\prime}{\sum}_{\substack{n \le x \\ (n, q) = 1}} \mu_{k}(n) - A_{k, q} x - B_{k, q} x^{1/(k + 1)}. 
\]
By (A.3) and (A.5) in \cite{I}, we see that
\begin{equation}
  \begin{split}
    &\frac{1}{2 \pi} \int^{+\infty}_{-\infty} \frac{\abs{\zeta(\beta + i t)}^{2} \abs{\zeta((k + 1)(\beta + i t))}^{2}}{\abs{\zeta(k (\beta + i t))}^{4} \abs{\zeta(2 k (\beta + i t))}^{2} \abs{\zeta((2 k + 2)(\beta + i t))}^{2} \abs{\beta + i t}^{2}} \\
    &\hphantom{\frac{1}{2 \pi} \int^{+\infty}_{-\infty}} \quad \times \prod_{p \mid q} \frac{\abs{1 - p^{-(\beta + i t)}}^{2} \abs{1 - p^{-(k + 1)(\beta + i t)}}^{2}}{\abs{1 - p^{-k (\beta + i t)}}^{4} \abs{1 - p^{-2 k (\beta + i t)}}^{2} \abs{1 - p^{-(2 k + 2)(\beta + i t)}}^{2}} \\
    &\hphantom{\frac{1}{2 \pi} \int^{+\infty}_{-\infty}} \quad \times \abs{\sum_{\substack{n = 1 \\ (n, q) = 1}}^{\infty} \frac{1}{n^{(2 k + 1)(\beta + i t)}}}^{4} \dd{t} \\
    &\quad = \int^{+\infty}_{0} \widehat{\Xi}_{k, q}^{2}(x) x^{-1 - 2 \beta} \dd{x} \\
    &\quad = \int^{+\infty}_{0} \Xi_{k, q}^{2}(x) x^{-1 - 2 \beta} \dd{x}. 
  \end{split} \label{eq:f-sq}
\end{equation}
Using Lemma~\ref{lem:zeta-left}, Lemma~\ref{lem:mv-1318} and Lemma~\ref{lem:mv-1323}, we have
\begin{align*}
  &\frac{\abs{\zeta(\beta + i t)}^{2} \abs{\zeta((k + 1)(\beta + i t))}^{2}}{\abs{\zeta(k (\beta + i t))}^{4} \abs{\zeta(2 k (\beta + i t))}^{2} \abs{\zeta((2 k + 2)(\beta + i t))}^{2} \abs{\beta + i t}^{2}}\\
  &\quad \ll \lr(1 + \abs{t})^{1 - 2 \beta} \omg{2 C_{2}}{\abs{t}} \omg{2 C_{2}}{k \abs{t}} \omg{4 C_{1}}[k \beta - \frac{1}{2}]{k \abs{t}} \omg{2 C_{1}}{k \abs{t}} \\
  &\hphantom{\quad \ll {}} \quad \times \omg{2 C_{1}}{k \abs{t}} \frac{k^{2}}{\lr(1 + \abs{t})^{2}} \\
  &\quad \ll k^{2} \lr(1 + \abs{t})^{-1 - 2 \beta} \omg{8 C_{1} + 4 C_{2}}[k \beta - \frac{1}{2}]{k \abs{t}}. 
\end{align*}
Therefore, we find that
\begin{align*}
  \int^{+\infty}_{0} \Xi_{k, q}^{2}(x) x^{-1 - 2 \beta} \dd{x} &\ll k^{6} C_{k, q}^{2} \int_{0}^{+\infty} \lr(t + e^{e^{2}})^{-1 - 2 \beta} \omg{C}[k \beta - \frac{1}{2}]{k t} \dd{t} \\
  &\ll k^{7} C_{k, q}^{2} \int_{0}^{+\infty} \lr(k t + e^{e^{2}})^{-1 - 2 \beta} \omg{C}[k \beta - \frac{1}{2}]{k t} \dd{t} \\
  &= k^{6} C_{k, q}^{2} \int_{0}^{+\infty} U^{-1 - 2 \beta} \omg{C}[k \beta - \frac{1}{2}]{u} \dd{u}
\end{align*}
where $C = 8 C_{1} + 4 C_{2}$ and $U = u + e^{e^{2}}$. 
Letting
\[
y = \exp(\exp(2 C k \log\frac{e}{2 k \beta - 1})), 
\]
we see that
\[
\frac{C}{\log\log Y} \max\{1, \log\frac{e}{\lr(k \beta - 1/2) \log\log Y}\} \le \beta
\]
where $Y = y + e^{e^{2}}$. 
Thus, we obtain
\begin{align*}
  \int^{y}_{0} U^{-1 - 2 \beta} \omg{C}[k \beta - \frac{1}{2}]{u} \dd{u} &\ll k \omg{C}[k \beta - \frac{1}{2}]{y}\\
  &\ll k y^{1/k}
\end{align*}
and
\begin{align*}
  &\int^{+\infty}_{y} U^{-1 - 2 \beta} \omg{C}[k \beta - \frac{1}{2}]{u} \dd{u} \\
  &\quad \ll \int^{+\infty}_{y} \exp\left(\vphantom{\log U \left(-1 - 2 \beta + \frac{C}{\log\log Y} \times \max\{1, \log\frac{e}{\lr(k \beta - 1/2) \log\log Y}\}\right)} \log U \left(\vphantom{-1 - 2 \beta + \frac{C}{\log\log Y} \times \max\{1, \log\frac{e}{\lr(k \beta - 1/2) \log\log Y}\}} -1 - 2 \beta + \frac{C}{\log\log Y} \right.\right. \\
  &\hphantom{\quad \ll \int^{+\infty}_{y} \exp\left(\vphantom{\log U \left(-1 - 2 \beta + \frac{C}{\log\log Y} \times \max\{1, \log\frac{e}{\lr(k \beta - 1/2) \log\log Y}\}\right)}\right.} \quad \left.\left. {} \times \max\{1, \log\frac{e}{\lr(k \beta - 1/2) \log\log Y}\} \vphantom{-1 - 2 \beta + \frac{C}{\log\log Y} \times \max\{1, \log\frac{e}{\lr(k \beta - 1/2) \log\log Y}\}}\right) \vphantom{\log U \left(-1 - 2 \beta + \frac{C}{\log\log Y} \times \max\{1, \log\frac{e}{\lr(k \beta - 1/2) \log\log Y}\}\right)}\right) \dd{u}\\
  &\quad \ll k Y^{-1 - 2 \beta} \omg{C}[k \beta - \frac{1}{2}]{y} \\
  &\quad \ll k y^{1/k}. 
\end{align*}
Then we have
\[
\int^{+\infty}_{0} \Xi_{k, q}^{2}(x) x^{-1 - 2 \beta} \dd{x} \ll k^{7} C_{k, q}^{2} \exp(\frac{1}{k}\exp(2 C k \log\frac{e}{2 k \beta - 1}))
\]
immediately. 
Hence, we obtain
\begin{align*}
  \int^{T}_{1} \Xi_{k, q}^{2}(x) \dd{x} &= \int^{T}_{1} \Xi_{k, q}^{2}(x) x^{-1 - 2 \beta} x^{1 + 2 \beta} \dd{x} \\
  &\le T^{1 + 2 \beta} \int^{T}_{1} \Xi_{k, q}^{2}(x) x^{-1 - 2 \beta} \dd{x} \\
  &\ll T^{1 + 2 \beta} k^{7} C_{k, q}^{2} \exp(\frac{1}{k}\exp(2 C k \log\frac{e}{2 k \beta - 1})). 
\end{align*}
Setting
\[
\beta \coloneq \frac{1}{2 k} + \frac{1}{10 k} \lr(\log T)^{-1/(2 C k + 1)}, 
\]
we see that
\begin{align*}
  &T^{1 + 2 \beta} \exp(\frac{1}{k}\exp(2 C k \log\frac{e}{2 k \beta - 1})) \\
  &\quad = T^{1 + 1/k} \exp(\frac{1}{k} \lr(\frac{1}{5} + (5 e)^{2 C k}) \lr(\log T)^{2 C k/(2 C k + 1)}), 
\end{align*}
and the proof is done. 
\section*{Acknowledgement}
The author thanks Professor Isao Kiuchi for his kind guidance and valuable comments.

\medskip\noindent {\footnotesize Joint Graduate school of Mathematics for Innovation, Kyushu  University, Motooka 744, Nishiku-ku, Fukuoka 819-0395, Japan. \\
e-mail: \href{mailto:terada.reo.872@s.kyushu-u.ac.jp}{\texttt{terada.reo.872@s.kyushu-u.ac.jp}}}

\end{document}